\theoremstyle{remark}
\newtheorem{lem}{\bf Lemma}[section]
\newtheorem{defn}{\bf Definition}[section]
\newtheorem{thm}{\bf Theorem}[section]
\newtheorem{cor}{\bf Corollary}[section]
\newtheorem{ex}{\bf Example}[section]
\newtheorem*{rem}{\bf Remark}
\newtheorem{prop}{\bf Proposition}[section]
\numberwithin{equation}{section} \numberwithin{figure}{section}
\renewcommand*{\to}{\rightarrow}
\renewcommand*{\bar}[1]{\overline{#1}}
\newcommand{\mb}[1]{\mathbb{#1}} % Field
\newcommand{\mk}[1]{\mathfrak{#1}}
\newcommand{\diag}{\operatorname{diag}}
\newcommand{\Jac}{\operatorname{Jac}}
\newcommand{\dsp}{\displaystyle}
\newcommand{\mf}[1]{\mathbf{#1}}
\title[Mean field Equations on hyperelliptic curves]{An algebraic construction of a solution to the mean field equations on hyperelliptic Curves and its adiabatic limit}
\author{Jia-Ming (Frank) Liou}
\address{Department of Mathematics\\
National Cheng Kung University, Taiwan\\
fjmliou@mail.ncku.edu.tw}
\address{NCTS, Mathematics}
\author{Chih-Chung Liu}
\address{Department of Mathematics\\
National Cheng Kung University, Taiwan\\
cliu@mail.ncku.edu.tw}
\begin{document}
\setcounter{section}{0}
\maketitle

\begin{abstract}\large
In this paper, we give an algebraic construction of the solution to the following mean field equation
$$
\Delta \psi+e^{\psi}=4\pi\sum_{i=1}^{2g+2}\delta_{P_{i}},
$$
on a genus $g\geq 2$ hyperelliptic curve $(X,ds^{2})$ where $ds^{2}$ is a canonical metric on $X$ and $\{P_{1},\cdots,P_{2g+2}\}$ is the set of Weierstrass points on $X.$ 
\end{abstract}
\large

\section{Introduction}
Let $f(x)$ be a complex polynomial in $x$ with  $2g+2$ distinct complex roots $\{e_{1},\cdots,e_{2g+2}\}.$ The affine plane curve $C_{0}=\{(x,y)\in\mb C^{2}:y^{2}=f(x)\}$ defines a noncompact Riemann surface with respect to the complex analytic topology on $\mb C^{2}.$ To compactifiy $C_{0}$ in the category of Riemann surfaces, we introduce another smooth affine plane curve $C_{0}'.$ Let $g(z)$ be the complex polynomial $g(z)=\prod_{i=1}^{2g+2}(1-e_{i}z)$ and $C_{0}'$ be the smooth affine plane curve defined by $w^{2}=g(z),$ i.e. $C_{0}'=\{(z,w)\in\mb C^{2}:w^{2}=g(z)\}.$ Let $U_{0}$ be the open subset of $C_{0}$ consisting of points $(x,y)$ so that $x\neq 0$ and $U_{0}'$ be the open subset of $C_{0}'$ consisting of points $(z,w)$ such that $z\neq 0.$ The map $\varphi:U_{0}\to U_{0}'$ defined by $\varphi(x,y)=(1/x,y/x^{g+1})$ is an isomorphism of Riemann surfaces. It is well known that the gluing $C_{0}\cup_{\varphi}C_{0}'$ of $C_{0}$ and $C_{0}'$ along $\varphi$ is a connected compact Riemann surface of genus $g;$ see \cite{Miran} or \cite{Mum}. The compact Riemann surface $C_{0}\cup_{\varphi}C_{0}'$ is called the hyperelliptic curve of genus $g$ defined by $y^{2}=f(x)$ and is denoted by $X$ in this paper. The holomorphic map $\pi:X\to\mb P^{1}$ defined by
$$\pi(P)=
\begin{cases}
(x(P):1) &\mbox{if $P\in C_{0},$}\\
(1:z(P)) &\mbox{if $P\in C_{0}'$}
\end{cases}$$
is a degree two ramified covering map of $\mb P^{1}$ where $(z_{0}:z_{1})$ is the homogeneous coordinate on $\mb P^{1}.$ The Weierstrass points of $X$ are the $2g+2$ ramification points $\{P_{1},\cdots,P_{2g+2}\}$ of $\pi$ such that $(x(P_{k}),y(P_{k}))=(e_{k},0)$ for $1\leq k\leq 2g+2.$

The space $H^{0}(X,\Omega_{X}^{1})$ of holomorphic one forms on $X$ has a simple basis of the form $\{x^{i-1}dx/y:1\leq i\leq g\}$ and the integral homology group $H_{1}(X)$ of $X$ has a (symplectic) $\mb Z$-basis $\{a_{i},b_{j}:1\leq i,j\leq g\}$ such that
$\int_{a_{j}}\omega_{i}=\delta_{ij}$ for $1\leq i,j\leq g.$ Denote $\tau_{ij}=\int_{b_{j}}\omega_{i}$ for $1\leq i,j\leq g$ and $\tau$ be the complex $g\times g$ matrix $[\tau_{ij}]_{i,j=1}^{g}.$ Then $\tau$ is a symmetric matrix with positive definite imaginary part. Let $\Lambda_{\tau}$ be the lattice in $\mb C^{g}$ generated by the column vectors of the $g\times 2g$ matrix $\Omega=[I_{g},\tau].$ Let $\Omega_{i}$ be the $i$-th column vector of $\Omega$ and $\{dx_{1},\cdots,dx_{2g}\}$ be the real basis dual to $\{\Omega_{i}:1\leq i\leq 2g\}.$ The complex torus $\Jac(X)=\mb C^{g}/\Lambda_{\tau}$ together with the class $[\omega]$ where $\omega=\sum_{i=1}^{g}dx_{i}\wedge dx_{g+i}$ is a principally polarized abelian variety called the Jacobian variety of $X.$ Fixing a point $P_{0}$ on $X,$ we define a holomorphic map
$$\mu:X\to \Jac(X),\quad \mu(P)=\left(\int_{P_{0}}^{P}\frac{dx}{y},\cdots,\int_{P_{0}}^{P}\frac{x^{g-1}dx}{y}\right)\mod\Lambda.$$
Let $(z_{1},\cdots,z_{g})$ be the standard holomorphic coordinate on $\Jac(X)$ and $d\widetilde{s}^{2}_{H}$ be the flat hermtian metric $\sum_{i,j=1}^{g}h_{ij}dz^{i}\otimes d\bar{z}^{j}$ on $\Jac(X)$ where $H=[h_{ij}]$ is a $g\times g$ positive definite hermitian matrix. The canonical metric $ds_{H}^{2}$ on $X$ is defined by $ds_{H}^{2}=\mu^{*}d\widetilde{s}_{H}^{2}$ and has the form
$$ds_{H}^{2}=\frac{1}{|y^{2}|}\sum_{i,j=1}^{g}h_{ij}x^{i-1}\bar{x}^{j-1}.$$
Let $\Delta_{H}$ be the Laplace operator associated with the metric $ds_{H}^{2}.$ In this paper, we study the following mean field equation
\begin{equation}\label{MFE}
\Delta_{H}\psi+e^{\psi}=4\pi\sum_{i=1}^{2g+2}\delta_{P_{i}},
\end{equation}
where $\{P_{1},\cdots,P_{2g+2}\}$ is the set of all Weierstrass points on $X$ and $\delta_{P}:C^{\infty}(X)\to\mb C$ is the Dirac measure centered at $P$ for $P\in X.$

In \cite{FL}, we discovered that when $X$ has genus two, the Gaussian curvature function $K$ of a canonical metric determines the solution $\psi$ to (\ref{MFE}). This paper is a continuation of \cite{FL}; we give an algebraic construction of the solution to (\ref{MFE}) involving the study of solutions to the formal nonlinear ordinary differential equation
\begin{equation}\label{ode1}
(tQ''(t)+Q'(t))Q(t)-t(Q'(t))^{2}=S(t)Q(t),
\end{equation}
and the study of solutions to the formal nonlinear partial differential equation
\begin{equation}\label{1}
u\frac{\partial^{2}u}{\partial x\partial y}-\frac{\partial u}{\partial x}\frac{\partial u}{\partial y}=\sigma u.
\end{equation}
Here $S(t)$ is a complex formal power series and $\sigma$ is a complex polynomial in $x,y.$ We call $S$ the data for (\ref{ode1}) and $\sigma$ the data for (\ref{1}) respectively. In Section \ref{no}, we define a sequence of polynomials to solve (\ref{ode1}) and give a necessary and sufficient condition for (\ref{ode1}) possesing a polynomial solution. In Section \ref{np}, we show that the existence of solutions to (\ref{1}) is equivalent to the nonemptyness of certain (ind) affine algebraic set. Solutions to (\ref{ode1}) would allow us to construct solutions to (\ref{1}) for certain type of polynomials $\sigma.$ In Section \ref{smfe}, using the method developed in Section \ref{no} and Section \ref{np}, we give a construction of the solution to (\ref{MFE}) and the closed form of the solution to (\ref{MFE}) when $H$ is a diagonal matrix with positive diagonals. In section \ref{al}, we introduce a real positive parameter $\gamma$ into \eqref{MFE} and generalize the solutions constructed in previous sections. The parameter arises from a rescaling of canonical metric by $\gamma$ and we discuss the adiabatic limit of solutions as $\gamma \to 0$.

\vspace{1cm}

{\bf Acknowledgements}
The authors would like to thank Prof. Chin-Lung Wang (NTU), Prof. Yu-Chen Shu (NCKU), and Prof. Huailiang Chang (HKUST) for many useful discussions. The draft of this paper was completed during the visit to Prof. Huailiang Chang at the Hong Kong University of Science and Technology. The first author was partially supported by MOST Grant 105-2115-M-006-014 and by NCTS. The second author was partially supported by MOST Grant 105-2115-M-006-012.%The author was partially supported by the .... grant .

\section{A formal nonlinear Ordinary Differential Equation}\label{no}
%In this section, we are going to study (\ref{ode1}). Here $S(t)$ is a complex formal power series called the data of (\ref{ode1}). In order to solve for (\ref{ode1}), let us introduce a sequence of polynomials.

Let $\{\lambda_{i}:i\geq 0\}$ be an infinite sequence of variables and $\mb K[\Lambda]$ be the ring of polynomials in $\{\lambda_{i}:i\geq 0\}$ over a field $\mb K.$ We define a sequence of polynomials $\{f_{\lambda}^{i}:i\geq 1\}$ in $\mb Q[\Lambda][t]$ by $f_{\lambda}^{1}(t)=\lambda_{0}$ and
\begin{equation}\label{dr1}
f_{\lambda}^{k+1}(t)=\frac{\lambda_{k}}{(k+1)^{2}}+\frac{t}{(k+1)^{2}}\sum_{i=0}^{k-1}(\lambda_{i}-(i+1)(2i+1-k)f_{\lambda}^{i+1}(t))f_{\lambda}^{k-i}(t),\quad k\geq 2.
\end{equation}
%Here $\mb Q[\Lambda]$ is the polynomial ring over $\mb Q$ in $\{\lambda_{i}:i\geq 0\}$ and $\mb C[\Lambda]=\mb Q[\Lambda]\otimes_{\mb Z}\mb C.$
By definition,
$$f_{\lambda}^{2}(t)=\frac{\lambda_{0}^{2}t+\lambda_{1}}{4},\quad f_{\lambda}^{3}(t)=\frac{\lambda_{0}\lambda_{1}t+\lambda_{2}}{9},\quad f_{\lambda}^{4}(t)=-\lambda_{0}^{2}\lambda_{1}t^{2}+(3\lambda_{1}^{2}+8\lambda_{0}\lambda_{2})t+\frac{\lambda_{3}}{16}$$
%For example, $f_{\Lambda}^{2}=(\lambda_{0}^{2}t+\lambda_{1})/4,$ and $f_{\Lambda}^{3}=(\lambda_{0}\lambda_{1}t+\lambda_{2})/9,$ and $f_{\Lambda}^{4}=-\lambda_{0}^{2}\lambda_{1}t^{2}+(3\lambda_{1}^{2}+8\lambda_{0}\lambda_{2})t+s_{3}/16$ and so forth.
By induction, the degree of $f_{\lambda}^{i}(t)$ in $t$ is $i-2$ for $i\geq 3$ and the $t^{0}$ term of $f_{\lambda}^{i}(t)$ is $\lambda_{i-1}/(i-1)^{2}$ for $i\geq 2.$ Let us denote $f_{\lambda}^{i}$ by
$$f_{\lambda}^{i}(t)=\sum_{j=0}^{i-2}\beta_{ij}(\lambda)t^{j},\quad \beta_{ij}(\lambda)\in\mb Q[\Lambda].$$
If $S(t)=\sum_{i=0}^{\infty}s_{i}t^{i}$ is a complex formal power series, we set $f_{S}^{1}(t)=s_{0}$ and
$$f_{S}^{i}(t)=\sum_{j=0}^{i-2}\beta_{ij}(s_{0},s_{1},\cdots)t^{j}$$
Notice that when $S(t)$ is a polynomial of degree at most $n,$ then $f_{S}^{i}(t)$ is divisible by $t$ for all $i\geq n+2.$

\begin{lem}\label{lemp}
Let $f(t)$ and $g(t)$ be complex power series such that $g(0)\neq 0$ and $f(t)g(t)$ is divisible by $t^{m}$ for some $m\geq 1.$ Then $f(t)$ is divisible by $t^{m}.$
\end{lem}
\begin{proof}
We assume that $f(t)g(t)=t^{m}h(t)$ for some $h(t)\in\mb C[[t]].$ Then $f(0)=0.$ Taking the (formal) derivatives of the equation $f(t)g(t)=t^{m}h(t)$ with respect to $t$ and using the fact that $g(0)\neq 0,$ we prove by induction that $f^{(i)}(0)=0$ for $1\leq i\leq m-1.$ This implies that $f(t)=t^{m}f_{1}(t)$ with $f_{1}\in\mb C[[t]].$
\end{proof}

Assume that $Q(t)$ is a solution to (\ref{ode1}) and $Q(t)$ is divisible by $t^{m}$ but not by $t^{m+1}.$ Define $Q_{1}(t)\in\mb C[[t]]$ such that $Q(t)=t^{m}Q_{1}(t).$ ($Q_{1}(t)$ is defined since $\mb C[[t]]$ is a unique factorization domain.) Then $Q_{1}(0)\neq 0.$ By an elementary computation,
$$t^{m}\left(\left(tQ_{1}''(t)+Q_{1}'(t)Q_{1}(t)\right)-t(Q_{1}'(t))^{2}\right)=S(t)Q_{1}(t).$$
By Lemma \ref{lemp}, $S(t)$ is divisible by $t^{m}.$ Define $S_{1}(t)$ by $S(t)=t^{m}S_{1}(t).$ Then
$$\left(tQ_{1}''(t)+Q_{1}'(t)Q_{1}(t)\right)-t(Q_{1}'(t))^{2}=S_{1}(t)Q_{1}(t).$$
This shows that $Q_{1}(t)$ is a solution to (\ref{ode1}) with the data $S_{1}(t)$ and with the initial condition $Q_{1}(0)\neq 0.$ Owing to this observation, it suffices to consider the solutions $Q(t)$ to (\ref{ode1}) for a given data $S(t)$ under the assumption $Q(0)\neq 0.$

\begin{prop}\label{odes}
Let $S(t)=\sum_{i=0}^{\infty}s_{i}t^{i}$ be a complex formal power series and $a$ be a nonzero complex number. A formal power series $Q(t)=\sum_{i=1}^{\infty}q_{i}t^{i}$ with $Q(0)=1/a$ solves (\ref{ode1}) for the data $S(t)$ if and only if $q_{i}=f_{S}^{i}(a)$ for $i\geq 1.$
\end{prop}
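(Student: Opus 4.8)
\medskip
\noindent\textbf{Proof sketch.}
The plan is to convert (\ref{ode1}) into an explicit recursion for the Taylor coefficients of $Q$ and then to recognise that recursion as (\ref{dr1}) evaluated at $t=a$. Write $Q(t)=\sum_{n\ge 0}q_{n}t^{n}$ with natural indexing, so $q_{0}=Q(0)=1/a$, and use the elementary identity $tQ''(t)+Q'(t)=(tQ'(t))'=\sum_{n\ge 0}(n+1)^{2}q_{n+1}t^{n}$. Expanding the left side of (\ref{ode1}) as $\big(\sum_{n\ge0}(n+1)^{2}q_{n+1}t^{n}\big)\big(\sum_{n\ge0}q_{n}t^{n}\big)-t\big(\sum_{n\ge0}(n+1)q_{n+1}t^{n}\big)^{2}$ and collecting the coefficient of $t^{m}$ on both sides, one splits off the top term $(m+1)^{2}q_{0}q_{m+1}$ of the first product and merges the two convolutions using $(j+1)^{2}-(j+1)(m-j)=(j+1)(2j+1-m)$. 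This shows that $Q$ solves (\ref{ode1}), as an identity of formal power series, if and only if
\[
(m+1)^{2}q_{0}q_{m+1}+\sum_{j=0}^{m-1}(j+1)(2j+1-m)q_{j+1}q_{m-j}=\sum_{i=0}^{m}s_{i}q_{m-i}\qquad\text{for all }m\ge 0.
\]

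Since $q_{0}=1/a\ne 0$, the coefficient $(m+1)^{2}q_{0}$ of the unknown $q_{m+1}$ never vanishes, so this system is triangular: the $m$-th equation determines $q_{m+1}$ uniquely from $q_{0},\dots,q_{m}$ and $s_{0},\dots,s_{m}$, hence there is a unique formal $Q$ with $Q(0)=1/a$ solving (\ref{ode1}). Isolating the term $s_{m}q_{0}$ on the right and dividing by $(m+1)^{2}q_{0}=(m+1)^{2}/a$ rewrites the $m$-th equation as
\[
q_{m+1}=\frac{s_{m}}{(m+1)^{2}}+\frac{a}{(m+1)^{2}}\sum_{j=0}^{m-1}\big(s_{j}-(j+1)(2j+1-m)q_{j+1}\big)q_{m-j}.
\]
Substituting $\lambda_{i}=s_{i}$ in (\ref{dr1}) and putting $t=a$ yields precisely this relation with every $q_{i}$ replaced by $f_{S}^{i}(a)$, for $k=m\ge 2$; the two lowest cases are matched directly against $f_{\lambda}^{1}=\lambda_{0}$ and $f_{\lambda}^{2}=(\lambda_{0}^{2}t+\lambda_{1})/4$, giving $q_{1}=s_{0}=f_{S}^{1}(a)$ from $m=0$ and $q_{2}=(s_{0}^{2}a+s_{1})/4=f_{S}^{2}(a)$ from $m=1$. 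Thus $\{f_{S}^{i}(a)\}_{i\ge 1}$ obeys the same recursion and starts with the same value $f_{S}^{1}(a)=s_{0}$ as $\{q_{i}\}_{i\ge 1}$, so by the uniqueness just observed $q_{i}=f_{S}^{i}(a)$ for every $i\ge 1$. Conversely, if $q_{0}=1/a$ and $q_{i}=f_{S}^{i}(a)$ for $i\ge 1$, then (\ref{dr1}) says these numbers satisfy the rewritten recursion, hence the displayed coefficient system, hence $Q$ solves (\ref{ode1}); this gives the ``if and only if''.

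The only genuine work is bookkeeping: expanding the two nonlinear terms of (\ref{ode1}), carrying out the simplification $(j+1)^{2}-(j+1)(m-j)=(j+1)(2j+1-m)$, and lining up indices so that the rearranged coefficient recursion coincides with (\ref{dr1}) at $t=a$ term by term. The one mild nuisance is that (\ref{dr1}) is stated only for $k\ge 2$, so $q_{1}$ and $q_{2}$ must be compared separately with the explicit expressions for $f_{\lambda}^{1}$ and $f_{\lambda}^{2}$ (equivalently, one notes that (\ref{dr1}) in fact holds for $k\ge 0$ with the empty-sum convention). There is no analytic subtlety, everything being an identity of formal power series.
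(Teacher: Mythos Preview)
Your proof is correct and follows essentially the same route as the paper's: expand both sides of (\ref{ode1}) into coefficient identities, isolate $q_{m+1}$, and recognise the resulting recursion as (\ref{dr1}) evaluated at $t=a$, then invoke induction (equivalently, uniqueness of the triangular system). Your treatment is in fact slightly more careful than the paper's in explicitly handling the base cases $q_{1},q_{2}$, since (\ref{dr1}) is stated only for $k\ge 2$.
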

\begin{proof}
After some basic computation, we know
\begin{align*}
(tQ''(t)+Q'(t))Q(t)-t(Q'(t))^{2}&=\sum_{k=0}^{\infty}\left(\sum_{i=0}^{k}(i+1)(2i+1-k)q_{i+1}q_{k-i}\right)t^{k},\\
S(t)Q(t) &=\sum_{k=0}^{\infty}\left(\sum_{i=0}^{k}s_{i}q_{k-i}\right)t^{k},
\end{align*}
If $Q(t)$ is a solution to (\ref{ode1}), then
\begin{equation}\label{r1}
\sum_{i=0}^{k}(i+1)(2i+1-k)q_{i+1}q_{k-i}=\sum_{i=0}^{k}s_{i}q_{k-i},\quad k\geq 0.
\end{equation}
Hence $q_{0}q_{1}=q_{0}s_{0}.$ Since $q_{0}\neq 0,$ $q_{1}=s_{0}.$ Then $q_{1}=f_{S}^{1}(a)$ holds. Furthermore, (\ref{r1}) can be rewritten as:
\begin{equation}\label{r2}
(k+1)^{2}q_{k+1}q_{0}=s_{k}q_{0}+\sum_{i=0}^{k-1}(s_{i}-(i+1)(2i+1-k)q_{i+1})q_{k-i},\quad k\geq 1
\end{equation}
which implies that (by $q_{0}=1/a$)
$$q_{k+1}=\frac{s_{k}}{(k+1)^{2}}+\frac{a}{(k+1)^{2}}\sum_{i=0}^{k-1}(s_{i}-(i+1)(2i+1-k)q_{i+1})q_{k-i},\quad k\geq 1.$$
By (\ref{dr1}) and induction, $q_{k+1}=f_{S}^{k+1}(a)$ for $k\geq 1.$ For the converse, since $q_{i}=f_{S}^{i}(a),$ $(q_{i})$ satisfies (\ref{r1}). Define $Q(t)=\sum_{i=0}^{\infty}q_{i}t^{i}.$ By (\ref{r1}), $Q(t)$ satisfies (\ref{ode1}). We complete the proof of our assertion.
\end{proof}
This proposition implies that the solution to (\ref{ode1}) is uniquely determined by the initial condition $Q(0)=a$ with $a\neq 0$ and the solution can be constructed by the numbers $f_{S}^{i}(a)$ for $i\geq 1.$

When $S(t)$ is a polynomial of degree $m,$ we would like to find the necessary and the sufficient condition for (\ref{ode1}) possessing polynomial solutions.

\begin{lem}\label{ld}
Let $S(t)$ be a polynomial of degree $m.$ If the solution $Q(t)$ to (\ref{ode1}) for the data $S(t)$ is a polynomial of degree $n$ in $t,$ then $n\geq m+2.$
\end{lem}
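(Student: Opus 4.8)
The plan is to compare the top-degree behavior of the two sides of (\ref{ode1}) under the assumption that $Q(t)$ is a polynomial of degree $n$ and $S(t)$ is a polynomial of degree $m$. Write $Q(t) = q_n t^n + (\text{lower order})$ with $q_n \neq 0$, and $S(t) = s_m t^m + (\text{lower order})$ with $s_m \neq 0$. I would first compute the degree of the left-hand side $L(t) := (tQ''(t)+Q'(t))Q(t) - t(Q'(t))^2$. The naive degree count of each summand $tQ''Q$, $Q'Q$, and $t(Q')^2$ is $2n-1$, but the leading $t^{2n-1}$ coefficients must be shown to cancel: the coefficient of $t^{2n-1}$ in $tQ''Q + Q'Q - t(Q')^2$ is $n(n-1)q_n^2 + n q_n^2 - n^2 q_n^2 = (n^2 - n + n - n^2)q_n^2 = 0$. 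So $\deg L \leq 2n-2$. On the other hand, $\deg(S(t)Q(t)) = m + n$ since $s_m q_n \neq 0$. Equating, $m + n \leq 2n - 2$, i.e. $n \geq m + 2$.

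The one gap in the crude argument above is the degenerate case $n \le 1$, or more precisely the possibility that after the cancellation of the $t^{2n-1}$ term the left side $L$ is identically zero (or that its degree drops further and fails to match $m+n$ for a trivial reason). If $L \equiv 0$, then $S(t)Q(t) \equiv 0$, forcing $S \equiv 0$, contradicting $\deg S = m \ge 0$ with $s_m \ne 0$ (here I am using that $S$ is a genuine polynomial of degree $m$, so $s_m \ne 0$); alternatively one notes $Q(0) \ne 0$ by the normalization preceding Proposition \ref{odes} (or by the reduction via Lemma \ref{lemp}), so $Q \not\equiv 0$ either. Thus the case $L \equiv 0$ is excluded, and more generally the identity $L(t) = S(t)Q(t)$ with $Q \not\equiv 0$ pins down $\deg L = m + n$ exactly once we know the right side is nonzero. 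Combined with $\deg L \le 2n-2$ this yields the claim; and since $m \ge 0$ this also records $n \ge 2$, consistent with the observation in the excerpt that $f_S^i(t)$ has degree $i-2$.

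The main obstacle — such as it is — is purely bookkeeping: carefully extracting the coefficient of $t^{2n-1}$ in $L(t)$ and confirming it vanishes identically in $q_n$. This is exactly the $k = 2n-1$ instance of the coefficient identity already written down in the proof of Proposition \ref{odes}, namely $\sum_{i} (i+1)(2i+1-k) q_{i+1} q_{k-i}$; with $Q$ of degree $n$ the only surviving term in that sum at $k = 2n-1$ is $i = n-1$, giving $n \cdot (2n-1 - (2n-1)) \cdot q_n^2 = n\cdot 0 \cdot q_n^2 = 0$, which is the cancellation. So in fact one can quote (\ref{r1}) directly rather than recomputing. After that, the degree inequality $m + n \le 2n - 2$ is immediate, and the lemma follows.
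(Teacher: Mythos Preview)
Your argument is correct and follows exactly the same approach as the paper's proof, namely comparing $\deg L \le 2n-2$ with $\deg(SQ) = m+n$. You have simply supplied more detail than the paper does: the paper asserts the degree bound $\deg L \le 2n-2$ without spelling out the $t^{2n-1}$ cancellation or the degenerate cases, whereas you verify both explicitly.
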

\begin{proof}
The polynomial $(tQ''(t)+Q'(t))Q(t)-t(Q'(t))^{2}$ has degree at most $2n-2$ while the degree of $S(t)Q(t)$ is $n+m.$ Hence $n+m\leq 2n-2$ implies that $n\geq m+2.$
\end{proof}

\begin{prop}\label{propf}
Let $S(t)$ be a polynomial of degree $m.$ The solution $Q(t)$ to (\ref{ode1}) for the data $S(t)$ constructed in Proposition (\ref{odes}) is a polynomial in $t$ if and only if there exists $N\in\mb N$ with $N\geq m+2$ such that $q_{0}^{-1}$ is the common root of the polynomials $\{f_{S}^{i}:N+1\leq i\leq 2N-1\}.$ Here $q_{0}=Q(0).$
\end{prop}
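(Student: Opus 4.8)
The plan is to analyze when the power series $Q(t)=\sum q_it^i$ with $q_i=f_S^i(a)$ (where $a=q_0^{-1}$) produced by Proposition~\ref{odes} has only finitely many nonzero coefficients. Since $S$ has degree $m$, the remark before Lemma~\ref{lemp} tells us $f_S^i(t)$ is divisible by $t$ for all $i\geq m+2$; equivalently, for such $i$ the constant term $\beta_{i0}=s_{i-1}/(i-1)^2$ vanishes. This does not mean $f_S^i(a)=0$, but it is the structural input that makes a finite \emph{stopping condition} plausible. First I would establish the forward direction: if $Q$ is a polynomial, say of degree $n$, then by Lemma~\ref{ld} we have $n\geq m+2$, and $q_i=f_S^i(a)=0$ for all $i\geq n+1$; so taking $N=n$ gives $N\geq m+2$ and in particular $f_S^i(a)=0$ for $N+1\leq i\leq 2N-1$, proving necessity. (One should also note $f_S^N(a)=q_N\neq0$, which pins down $N$ as the true degree, though the statement only asks for existence of \emph{some} such $N$.)

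The substantive direction is the converse: assuming $a^{-1}=q_0$ is a common root of $f_S^{N+1},\ldots,f_S^{2N-1}$ for some $N\geq m+2$, I must show $q_i=f_S^i(a)=0$ for \emph{all} $i\geq N+1$, i.e. that $Q$ is a polynomial of degree $\leq N$. The natural tool is the recursion~(\ref{r1}), or its solved form~(\ref{r2}): $(k+1)^2 q_{k+1}=s_k q_0 a + a\sum_{i=0}^{k-1}(s_i-(i+1)(2i+1-k)q_{i+1})q_{k-i}$. Suppose for contradiction that $q_{k+1}\neq0$ for some $k+1\geq N+1$, and take the \emph{smallest} such index, call it $M+1$ with $M\geq 2N-1$ (since $q_{N+1},\ldots,q_{2N-1}$ all vanish by hypothesis; note $2N-1\geq N+1$ because $N\geq2$). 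The plan is to feed $k=M$ into the recursion and show the right-hand side is forced to vanish. In the sum $\sum_{i=0}^{M-1}$, the term $q_{M-i}$ is nonzero only when $M-i\leq N$, i.e. $i\geq M-N$; and since $M\geq 2N-1$ we get $M-N\geq N-1$, so only indices $i$ with $i\geq N-1$ contribute a nonzero $q_{M-i}$. For those indices $q_{i+1}$ appears with $i+1\geq N$; but $q_{i+1}=0$ unless $i+1=N$ exactly or $i+1> M$ — and $i\leq M-1$ rules out the latter, so the only surviving contribution comes from $i=N-1$, giving a term $(s_{N-1}-N(N-1)q_N)q_{M-N+1}$. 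Here $q_{M-N+1}$ is nonzero only if $M-N+1\leq N$, i.e. $M\leq 2N-1$, forcing $M=2N-1$ and $q_{M-N+1}=q_N$; also the $s_k q_0 a$ term needs $s_M\neq0$, impossible since $M=2N-1>m$. So the recursion collapses to $N^2 q_{2N}=a(s_{N-1}-N(N-1)q_N)q_N$, and this is exactly (a scalar multiple of) the statement that $f_S^{2N}(a)=0$ given the vanishing of $f_S^{N+1},\ldots,f_S^{2N-1}$ at $a$ — wait, I must be careful: $2N$ is \emph{not} in the hypothesized range $[N+1,2N-1]$, so I need that $f_S^{2N}(a)$ is \emph{determined} by the lower ones. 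This is where the real work sits.

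The main obstacle, then, is the bookkeeping that turns ``$q_{N+1}=\cdots=q_{2N-1}=0$'' into ``$q_j=0$ for all $j>N$'' via the recursion — essentially an induction on $j$ showing that once a block of $N-1$ consecutive coefficients past the (conjectured) top degree vanish, all further ones do. The cleanest route is: prove by strong induction on $j\geq 2N$ that if $q_{N+1}=\cdots=q_{j-1}=0$ then $q_j=0$, by examining the $k=j-1$ instance of~(\ref{r2}) and checking — via the degree count $\deg S=m\leq N-2$ and the index constraints above — that every summand on the right contains either a factor $q_\ell$ with $N<\ell<j$ (hence zero) or reproduces, for the base case $j=2N$, the polynomial identity that $f_S^{2N}(a)$ equals a $\mathbb{Q}$-polynomial combination of $f_S^{N+1}(a),\ldots,f_S^{2N-1}(a)$ and the already-known data — a consequence of the defining recursion~(\ref{dr1}) for the $f_\lambda^i$ themselves, since $f_\lambda^{2N}$ is a polynomial expression in $f_\lambda^{2},\ldots,f_\lambda^{2N-1}$ and the $\lambda$'s. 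I would isolate this as a short sublemma (``the ideal generated by $f_S^{N+1},\ldots,f_S^{2N-1}$ in $\mathbb{C}[a]$ contains $f_S^i$ for all $i\geq N+1$, given $\deg S\leq N-2$''), prove it by the same recursion, and then the Proposition follows by combining it with the forward direction and Lemma~\ref{ld}. I expect the indexing — keeping straight which of the weights $(i+1)(2i+1-k)$ survive and confirming no ``accidental'' contribution from $i$ near $k/2$ sneaks in — to be the fiddly but ultimately routine part; the conceptual content is just that finitely many vanishing conditions propagate through a triangular recursion.
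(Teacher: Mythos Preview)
Your approach is exactly the paper's: analyze the recursion~(\ref{r2}) and show by induction that $q_{N+1}=\cdots=q_{2N-1}=0$ forces $q_j=0$ for all $j>N$. The forward direction is fine. In the converse, however, you mis-computed the single coefficient that matters, and this led you to invent an unnecessary sublemma.

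At the base case $k=M=2N-1$, the surviving index is $i=N-1$, and the weight is
\[
(i+1)(2i+1-k)\;=\;N\bigl(2(N-1)+1-(2N-1)\bigr)\;=\;N\cdot 0\;=\;0,
\]
not $N(N-1)$. Since also $s_{N-1}=0$ (because $N-1\geq m+1$) and $s_{2N-1}=0$, the entire right-hand side of~(\ref{r2}) vanishes and $q_{2N}=0$ immediately. For the inductive step with $k=M>2N-1$, the condition $q_{M-i}\neq0$ forces $i\geq M-N>N-1$, so the ``dangerous'' index $i=N-1$ does not even appear; every contributing term has $s_i=0$ and $q_{i+1}=0$, giving $q_{M+1}=0$.

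So your outline is correct once you fix the coefficient; the paper's proof records precisely this observation (``when $i=N-1$, $2i-2N+2=0$'') and then runs the straightforward induction. The ideal-theoretic sublemma you proposed is true (and is essentially the content of the Lemma immediately following the Proposition in the paper), but it is not needed for the Proposition itself.
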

\begin{proof}
Suppose that $Q(t)=\sum_{i=0}^{\infty}q_{i}t^{i}$ is a polynomial of degree $n.$ Then $q_{i}=0$ for all $i\geq n+1.$ We choose $N=n.$ By Lemma \ref{ld}, $N\geq m+2.$ Since $q_{i}=f_{S}^{i}(q_{0}^{-1}),$ $q_{0}^{-1}$ is a root of $f_{S}^{i}$ for all $i\geq N+1$ and hence $f_{S}^{i}(q_{0}^{-1})=0$ for all $i\geq N+1.$ Therefore $q_{0}^{-1}$ is a root of $f_{S}^{i}(t)$ for $i\geq N+1$ and thus for $N+1\leq i\leq 2N-1.$

Let us prove the converse. Assume that $q_{0}^{-1}$ is a common root of $f_{S}^{i}(t)$ for $N+1\leq i\leq 2N-1.$ Let us prove the statement $q_{2N-1+j}=0$ for $j\geq 1$ by induction on $j.$ For $j=1,$
$$q_{2N}=\frac{1}{(2N)^{2}q_{0}}\sum_{i=0}^{2N-2}(s_{i}-(i+1)(2N-2i+2)q_{i+1})q_{2N-1-i}.$$
For $0\leq i\leq N-2,$ $N+1\leq 2N-1-i\leq 2N-1.$ Hence $q_{2N-1-i}=0$ for $0\leq i\leq N-2.$ For $i\geq N-1,$ $i\geq m+1$ and hence $s_{i}=0$ for $i\geq N-1.$ For $N\leq i\leq 2N-2,$ $N+1\leq i+1\leq 2N-1.$ Hence $q_{i+1}=0$ for $N\leq i\leq 2N-2$ by assumption. Notice that when $i=N-1,$ $2i-2N+2=0.$ We conclude that $q_{2N}=0.$ This proves that the statement holds for $j=1.$ We assume that the statement is true for $0\leq j\leq l.$ If $j=l+1,$ $2N-1+j\geq N\geq m+2$ and hence $s_{2N-1+j}=0$ which implies that
$$q_{2N+l}=\frac{1}{(2N+l)^{2}q_{0}}\sum_{i=0}^{2N+l-2}(s_{i}-(i+1)(2i+1-k)q_{i+1})q_{2N+l-1-i}.$$
For $0\leq i\leq N-1,$ $N+1\leq N+l<2N-1+l-i\leq 2N-1+l-i\leq 2N-1+l.$ By induction hypothesis and the assumption, we obtain that $q_{2N-1+l-i}=0$ for $0\leq i\leq N-1.$ For $N\leq i\leq 2N+l-2,$ $N+1\leq i+1\leq 2N-1+l$ and hence $s_{i}=q_{i+1}=0.$ We conclude that $q_{2N+l}=0.$ We prove that $q_{2N-1+j}=0$ holds for $j=l+1.$ By mathematical induction, $q_{2N-1+j}=0$ for all $j\geq 1.$ Combining with the assumption, one has $q_{i}=0$ for all $i\geq N+1.$ Therefore $Q(t)$ is a polynomial.
\end{proof}

In fact, we can prove that:

\begin{lem}
Let $x_{0},\cdots,x_{m}$ be a set of formal variables for $m\geq 1.$ For each $i,$ define a polynomial over $\mb Q$ in $x_{0},\cdots,x_{m},t$ by
\begin{equation}\label{poly}
F^{i}(x_{0},\cdots,x_{m},t)=f_{x_{0}+x_{1}t+\cdots+x_{m}t^{m}}^{i}(t),
\end{equation}
where $f_{S}^{i}$ is the polynomial defined in (\ref{dr1}). Let $N$ be a natural number so that $N\geq m+2.$ The set of polynomials $\{F^{i}:N+1\leq i\leq 2N-1\}$ is divisible by $G(x_{0},\cdots,x_{g-1},t)\in\mb Q[x_{0},\cdots,x_{g-1},t]$ if and only if $\{F^{i}:i\geq N+1\}$ is divisible by $G.$
\end{lem}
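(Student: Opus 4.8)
The reverse implication is immediate, as $\{i:N+1\le i\le 2N-1\}\subseteq\{i:i\ge N+1\}$; the content is the forward implication, which is the polynomial-coefficient analogue of Proposition \ref{propf}. The plan is to prove, by strong induction on $i$, that $G$ divides $F^{i}$ in $\mb Q[x_{0},\dots,x_{m},t]$ for every $i\ge N+1$, the cases $N+1\le i\le 2N-1$ being the standing hypothesis. Throughout, set $S(t)=x_{0}+x_{1}t+\cdots+x_{m}t^{m}$, so that $F^{i}=f^{i}_{S}$ and the recursion (\ref{dr1}) reads
\begin{equation*}
F^{k+1}(t)=\frac{s_{k}}{(k+1)^{2}}+\frac{t}{(k+1)^{2}}\sum_{j=0}^{k-1}\bigl(s_{j}-(j+1)(2j+1-k)F^{j+1}(t)\bigr)F^{k-j}(t),
\end{equation*}
where $s_{j}=x_{j}$ for $0\le j\le m$ and $s_{j}=0$ for $j>m$.

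For the inductive step I would fix $i=k+1\ge 2N$, observe that $k\ge 2N-1\ge m+1$ forces $s_{k}=0$ so that the free term of the recursion drops out, and then show that $G$ divides each summand $\bigl(s_{j}-(j+1)(2j+1-k)F^{j+1}\bigr)F^{k-j}$ with $0\le j\le k-1$; since divisibility by $G$ is preserved under sums and under multiplication by $t/(k+1)^{2}$, this yields $G\mid F^{k+1}$. The verification splits on the size of $k-j$. If $k-j\ge N+1$, then $N+1\le k-j\le k<i$, so the induction hypothesis gives $G\mid F^{k-j}$ and hence $G$ divides the summand. If instead $k-j\le N$, then $j\ge k-N\ge N-1\ge m+1$, so $s_{j}=0$ and the summand collapses to $-(j+1)(2j+1-k)F^{j+1}F^{k-j}$; here either the integer coefficient $2j+1-k$ vanishes, making the summand zero, or else $j\ge N$, in which case $N+1\le j+1\le k<i$ and the induction hypothesis gives $G\mid F^{j+1}$. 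In every case $G$ divides the summand, the induction closes, and $G\mid F^{i}$ for all $i\ge N+1$.

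I do not anticipate a genuine obstacle: the argument is precisely the bookkeeping already carried out in the proof of Proposition \ref{propf}, transported from the statement that $q_{0}^{-1}$ is a common root of the relevant $f^{i}_{S}$ to the statement that $G$ is a common divisor of the $F^{i}$. The one point that needs care is the boundary triple $k=2N-1$, $j=N-1$, where neither $F^{j+1}=F^{N}$ nor $F^{k-j}=F^{N}$ is covered by the hypothesis and one must instead invoke the cancellation $2j+1-k=2(N-1)+1-(2N-1)=0$ of the numerical coefficient, the same vanishing used in Proposition \ref{propf}. Beyond this, only the trivial inequalities $k\ge m+1$ and $j\ge m+1$ are needed — both immediate from $N\ge m+2$ — to ensure that $s_{k}$ and $s_{j}$ vanish.
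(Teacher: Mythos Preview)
Your proposal is correct and follows exactly the approach the paper indicates: the paper's own proof is merely the sketch ``The proof follows from the recursive relation $F^{k+1}=\frac{t}{(k+1)^{2}}\sum_{i=0}^{k-1}(x_{i}-(i+1)(2i+1-k)F^{i+1})F^{k-i}$ for $k\ge m+1$ and is similar as that given in Proposition \ref{propf}. We leave it to the readers,'' and you have carried out precisely that reader's exercise, including the same boundary cancellation $2j+1-k=0$ at $(k,j)=(2N-1,N-1)$ that appears in the proof of Proposition \ref{propf}.
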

\begin{proof}
The proof follows from the recursive relation
$$F^{k+1}=\frac{t}{(k+1)^{2}}\sum_{i=0}^{k-1}(x_{i}-(i+1)(2i+1-k)F^{i+1})F^{k-i},\quad k\geq m+1$$
and is similar as that given in Proposition \ref{propf}. We leave it to the readers.
\end{proof}

\begin{cor}\label{cor1}
Let $S(t)$ be a complex polynomial of degree $m$ and $Q(t)$ is a solution to (\ref{ode1}) for the data $S(t)$ such that $Q(0)^{-1}$ is a common zero of $\{f_{S}^{i}:m+3\leq i\leq 2m+3\}.$ Then $Q(t)$ is a polynomial of degree $m+2.$
\end{cor}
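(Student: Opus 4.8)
The plan is to observe that the hypothesis is exactly the $N=m+2$ instance of the polynomiality criterion in Proposition \ref{propf}. With $N:=m+2$ we have $N\geq m+2$, and the index range $N+1\leq i\leq 2N-1$ appearing in that proposition reads $m+3\leq i\leq 2m+3$, which is precisely the range for which $Q(0)^{-1}$ is assumed to be a common zero of the $f_S^i$. So the first step is simply to apply Proposition \ref{propf} to conclude that $Q(t)$ is a polynomial. One should record here that $Q$ is not the zero series, since $Q(0)^{-1}$ is assumed to exist and hence $Q(0)\neq 0$; thus $Q$ has a well-defined degree.

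For the degree bound from above, I would read off the sharper conclusion contained inside the proof of Proposition \ref{propf}: in the converse direction of that argument one establishes $q_{2N-1+j}=0$ for all $j\geq 1$, and together with the hypothesis that $q_i=0$ for $N+1\leq i\leq 2N-1$ this forces $q_i=0$ for every $i\geq N+1=m+3$. Therefore $\deg Q\leq m+2$.

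Finally, since $Q$ is now known to be a polynomial solution of (\ref{ode1}) for the degree-$m$ data $S$, Lemma \ref{ld} supplies the opposite inequality $\deg Q\geq m+2$. Combining the two bounds yields $\deg Q=m+2$, as claimed. There is no genuine obstacle here: the only points requiring care are the index arithmetic (checking that $2N-1=2m+3$ exactly when $N=m+2$) and the logical ordering, namely that Lemma \ref{ld} may be invoked only after Proposition \ref{propf} has first guaranteed that $Q$ is a polynomial.
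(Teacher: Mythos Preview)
Your proposal is correct and follows essentially the same argument as the paper: apply Proposition~\ref{propf} with $N=m+2$ (so that the range $N+1\leq i\leq 2N-1$ is exactly $m+3\leq i\leq 2m+3$) to obtain $q_i=0$ for all $i\geq m+3$, hence $\deg Q\leq m+2$, and then invoke Lemma~\ref{ld} for the reverse inequality. Your additional remarks about $Q(0)\neq 0$ and the logical ordering of the lemmas are accurate and make the exposition slightly more careful than the paper's version, but the underlying route is identical.
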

\begin{proof}
By assumption and the Proposition \ref{propf}, $q_{i}=0$ for $i\geq m+3.$ Then $Q(t)$ is a polynomial of degree at most $m+2.$ By Lemma \ref{ld}, the degree of $Q(t)$ is at least $m+2.$ We conclude that $Q(t)$ is a polynomial of degree $m+2.$
\end{proof}

\begin{lem}\label{resc}
For any $i\geq 1,$ and any $S(t)\in\mb C[[t]],$
$$f_{\lambda S}^{i}(t)=\lambda f_{S}^{i}(\lambda t)$$
for any $\lambda\in\mb C.$
\end{lem}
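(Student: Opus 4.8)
The plan is to prove the scaling identity $f^i_{\lambda S}(t) = \lambda f^i_S(\lambda t)$ by induction on $i$, directly from the recursion (\ref{dr1}) that defines the polynomials $f^i_\lambda$. First I would record the base cases: for $i=1$ we have $f^1_S(t) = s_0$, so $f^1_{\lambda S}(t) = \lambda s_0 = \lambda f^1_S(\lambda t)$ since $f^1_S$ is constant; a quick check of $f^2$, $f^3$ from the explicit formulas in the excerpt confirms the pattern and builds intuition. The inductive hypothesis is that $f^j_{\lambda S}(t) = \lambda f^j_S(\lambda t)$ for all $1 \le j \le k$, and I want to deduce the same for $j = k+1$.

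The main step is to substitute the hypothesis into the recursion. Writing the data $\lambda S(t) = \sum_i (\lambda s_i) t^i$, the $i$-th coefficient is $\lambda s_i$, so (\ref{dr1}) gives
\begin{equation*}
f^{k+1}_{\lambda S}(t) = \frac{\lambda s_k}{(k+1)^2} + \frac{t}{(k+1)^2}\sum_{i=0}^{k-1}\bigl(\lambda s_i - (i+1)(2i+1-k) f^{i+1}_{\lambda S}(t)\bigr) f^{k-i}_{\lambda S}(t).
\end{equation*}
Now apply the inductive hypothesis to each factor $f^{i+1}_{\lambda S}(t) = \lambda f^{i+1}_S(\lambda t)$ and $f^{k-i}_{\lambda S}(t) = \lambda f^{k-i}_S(\lambda t)$ (valid since $i+1 \le k$ and $k-i \le k$). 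Each summand then carries a factor $\lambda \cdot \lambda = \lambda^2$ from the product, while the stray $\lambda s_i$ term inside the parenthesis must be handled by writing $\lambda s_i - (i+1)(2i+1-k)\lambda f^{i+1}_S(\lambda t) = \lambda\bigl(s_i - (i+1)(2i+1-k) f^{i+1}_S(\lambda t)\bigr)$, so the whole parenthesis pulls out one $\lambda$ and the trailing $f^{k-i}_{\lambda S}$ contributes another. Thus the sum acquires an overall $\lambda^2$, and together with the $t$ in front we get $\lambda^2 t$ times $\sum_{i=0}^{k-1}(s_i - (i+1)(2i+1-k) f^{i+1}_S(\lambda t)) f^{k-i}_S(\lambda t)$.

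The final bookkeeping step is to match this against $\lambda f^{k+1}_S(\lambda t)$: by definition, $f^{k+1}_S(\lambda t) = \frac{s_k}{(k+1)^2} + \frac{\lambda t}{(k+1)^2}\sum_{i=0}^{k-1}(s_i - (i+1)(2i+1-k) f^{i+1}_S(\lambda t)) f^{k-i}_S(\lambda t)$, simply by evaluating the defining recursion at $\lambda t$. Multiplying by $\lambda$ reproduces exactly the expression obtained above — the constant term $\lambda s_k/(k+1)^2$ matches, and the $\lambda \cdot \lambda t = \lambda^2 t$ prefactor on the sum matches. This closes the induction. I do not anticipate a genuine obstacle here; the only thing to be careful about is the precise placement of the $\lambda$'s — in particular that the argument of every $f^{\bullet}_S$ is uniformly $\lambda t$ (not $t$), which is what makes the recursion close — and that the index ranges $i+1 \le k$, $k-i \le k$ keep us strictly within the inductive hypothesis.
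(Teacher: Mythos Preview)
Your proof is correct and follows essentially the same route as the paper's: induction on $i$ using the defining recursion (\ref{dr1}), substituting the hypothesis into each factor and then pulling out the two $\lambda$'s to match $\lambda f_S^{k+1}(\lambda t)$. Your version is in fact slightly more careful, since you explicitly invoke strong induction (all $1\le j\le k$) and verify the index bounds $i+1\le k$, $k-i\le k$, whereas the paper glosses over this.
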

\begin{proof}
When $i=1,$ the statement is obvious. One can also verify that the statement is true for $i=2$ and $3$. Assume that the statement is true for $i=k.$ For $i=k+1,$ we use the recursive relation:
\begin{align*}
f_{\lambda S}^{k+1}(t)&=\frac{\lambda s_{k}}{(k+1)^{2}}+\frac{t}{(k+1)^{2}}\sum_{i=0}^{k-1}(\lambda s_{i}-(i+1)(2i+1-k)f_{\lambda S}^{i+1}(t))f_{\lambda S}^{k-i}(t)\\
&=\frac{\lambda s_{k}}{(k+1)^{2}}+\frac{t}{(k+1)^{2}}\sum_{i=0}^{k-1}(\lambda s_{i}-(i+1)(2i+1-k)\lambda f_{ S}^{i+1}(\lambda t)) \lambda f_{\lambda S}^{k-i}(\lambda t)\\
&=\lambda\left(\frac{s_{k}}{(k+1)^{2}}+\frac{\lambda t}{(k+1)^{2}}\sum_{i=0}^{k-1}(s_{i}-(i+1)(2i+1-k)f_{S}^{i+1}(\lambda t))f_{ S}^{k-i}(\lambda t)\right)\\
&=\lambda f_{S}^{i}(\lambda t).
\end{align*}
\end{proof}

This lemma implies that:
\begin{cor}
Let $S(t)$ be a complex polynomial of degree $m.$ Then (\ref{ode1}) has a polynomial solution for data $S(t)$ if and only if (\ref{ode1}) has polynomial solution for data $\lambda S(t)$ for $\lambda \in\mb C^{*}.$

\end{cor}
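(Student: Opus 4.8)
The plan is to leverage the rescaling identity from Lemma \ref{resc} together with the characterization of polynomial solutions in Proposition \ref{propf} (or equivalently Corollary \ref{cor1}). First I would observe that by Proposition \ref{propf}, equation (\ref{ode1}) has a polynomial solution for the data $S(t)$ precisely when there exists $N\geq m+2$ and a nonzero complex number $a$ such that $a$ is a common root of $\{f_{S}^{i}:N+1\leq i\leq 2N-1\}$, in which case the solution is the one from Proposition \ref{odes} with $Q(0)=1/a$.

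Next I would translate this criterion for the data $\lambda S(t)$ using Lemma \ref{resc}: we have $f_{\lambda S}^{i}(t)=\lambda f_{S}^{i}(\lambda t)$ for all $i\geq 1$. Since $\lambda\in\mb C^{*}$, the number $a'$ is a root of $f_{\lambda S}^{i}$ if and only if $\lambda a'$ is a root of $f_{S}^{i}$. Therefore, for a fixed $N\geq m+2$, the polynomials $\{f_{\lambda S}^{i}:N+1\leq i\leq 2N-1\}$ have a common nonzero root $a'$ if and only if the polynomials $\{f_{S}^{i}:N+1\leq i\leq 2N-1\}$ have the common nonzero root $\lambda a'$. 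Note that $\lambda S(t)$ has the same degree $m$ as $S(t)$ since $\lambda\neq 0$, so the bound $N\geq m+2$ is the same in both cases.

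Assembling these two observations: (\ref{ode1}) has a polynomial solution for $S(t)$ $\iff$ there exist $N\geq m+2$ and a nonzero $a$ with $f_{S}^{i}(a)=0$ for $N+1\leq i\leq 2N-1$ $\iff$ there exist $N\geq m+2$ and a nonzero $a'=a/\lambda$ with $f_{\lambda S}^{i}(a')=0$ for $N+1\leq i\leq 2N-1$ $\iff$ (\ref{ode1}) has a polynomial solution for $\lambda S(t)$. I would write this chain out explicitly, citing Proposition \ref{propf} at the first and last equivalences and Lemma \ref{resc} at the middle one.

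I do not expect any serious obstacle here; the only point requiring a modicum of care is making sure the degree of the data is unchanged under multiplication by a nonzero scalar (so that the index range $N+1\leq i\leq 2N-1$ in Proposition \ref{propf} is genuinely the same), and keeping track of the bijection $a\leftrightarrow a/\lambda$ between the nonzero common roots of the two families of polynomials. Since the statement is symmetric in $S$ and $\lambda S$ (replacing $\lambda$ by $\lambda^{-1}$), proving one implication suffices, but the argument above gives both directions simultaneously.
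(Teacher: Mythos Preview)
Your proposal is correct and matches the paper's intent: the paper simply records the corollary as an immediate consequence of Lemma~\ref{resc}, and you have spelled out precisely how that lemma, combined with the polynomial-solution criterion of Proposition~\ref{propf}, yields the equivalence. The only additional remark worth making is that Lemma~\ref{resc} together with Proposition~\ref{odes} in fact shows that the solution for $\lambda S$ with initial value $\lambda/a$ is exactly $\lambda$ times the solution for $S$ with initial value $1/a$, so one could bypass Proposition~\ref{propf} entirely; but your route through the common-root criterion is equally valid and is the natural reading of ``this lemma implies that.''
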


Given any complex polynomial $B(t)=\sum_{i=0}^{n}b_{i}t^{i},$ we define a new polynomial $\widetilde{B}(t)$ by
$$\widetilde{B}(t)=t^{\deg B}B(t^{-1})$$
and write $\widetilde{B}(t)=\sum_{i=0}^{n}\widetilde{b}_{i}t^{i}$ where $n=\deg B(t).$ Then $\widetilde{b}_{i}=b_{n-i}$ for $0\leq i\leq n.$

\begin{prop}
Let $S(t)$ be a complex polynomial of degree $m.$ Suppose that (\ref{ode1}) has a polynomial solution $Q(t)$ of degree $n$ for the data $S(t).$ Then $\widetilde{Q}(t)$ solves (\ref{ode1}) for the data $t^{n-m-2}\widetilde{S}(t).$
\end{prop}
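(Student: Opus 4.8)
The plan is to recast (\ref{ode1}) in a shape where the substitution $t\mapsto t^{-1}$ is transparent. The point is that $tQ''+Q'=\frac{d}{dt}(tQ')$ and $t(Q')^{2}=(tQ')Q'$, so the left-hand side of (\ref{ode1}) equals $Q\cdot\frac{d}{dt}(tQ')-(tQ')Q'$; dividing (\ref{ode1}) by $Q^{2}$ and using the quotient rule, one sees that, as an identity in $\mb C(t)$ (legitimate since $Q$ is a nonzero polynomial, and equivalent to the original polynomial identity after clearing $Q^{2}$), equation (\ref{ode1}) for the data $S$ is the same as
\begin{equation}\label{log}
\frac{d}{dt}\left(\frac{tQ'(t)}{Q(t)}\right)=\frac{S(t)}{Q(t)}.
\end{equation}
Writing $L_{Q}:=tQ'/Q$, this says $Q$ solves (\ref{ode1}) for $S$ precisely when $L_{Q}'=S/Q$. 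I would first record that by Lemma \ref{ld} we have $n\geq m+2$, so $t^{n-m-2}\widetilde{S}(t)$ is genuinely a polynomial; also $\widetilde{Q}$ is a nonzero polynomial (its constant term is the leading coefficient of $Q$), so (\ref{log}) applies to it as well.

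Next I would compute $L_{\widetilde{Q}}$. From $\widetilde{Q}(t)=t^{n}Q(1/t)$ one gets $\widetilde{Q}'(t)=n t^{n-1}Q(1/t)-t^{n-2}Q'(1/t)$, hence
$$\frac{t\widetilde{Q}'(t)}{\widetilde{Q}(t)}=n-\frac{Q'(1/t)}{t\,Q(1/t)}=n-L_{Q}(1/t).$$
Differentiating this identity of rational functions and using the chain rule,
$$L_{\widetilde{Q}}'(t)=-\frac{d}{dt}L_{Q}(1/t)=\frac{1}{t^{2}}L_{Q}'(1/t)=\frac{1}{t^{2}}\cdot\frac{S(1/t)}{Q(1/t)},$$
where the last equality is (\ref{log}) for $Q$ evaluated at $t^{-1}$.

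It then suffices to match the right-hand side with $t^{n-m-2}\widetilde{S}(t)/\widetilde{Q}(t)$. Since $\widetilde{S}(t)=t^{m}S(1/t)$ and $\widetilde{Q}(t)=t^{n}Q(1/t)$ by definition,
$$\frac{t^{n-m-2}\widetilde{S}(t)}{\widetilde{Q}(t)}=\frac{t^{n-m-2}\cdot t^{m}S(1/t)}{t^{n}Q(1/t)}=\frac{1}{t^{2}}\cdot\frac{S(1/t)}{Q(1/t)}=L_{\widetilde{Q}}'(t).$$
Thus $L_{\widetilde{Q}}'=t^{n-m-2}\widetilde{S}/\widetilde{Q}$, which by the equivalence (\ref{log}) applied to $\widetilde{Q}$ with data $t^{n-m-2}\widetilde{S}$ (clearing the denominator $\widetilde{Q}^{2}$) is exactly the assertion that $\widetilde{Q}$ solves (\ref{ode1}) for the data $t^{n-m-2}\widetilde{S}(t)$.

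The only genuine idea is the reformulation (\ref{log}); after that the proof is a mechanical change of variable, and the sole thing needing attention is the bookkeeping of powers of $t$, which is precisely where $n\geq m+2$ (Lemma \ref{ld}) is used to keep the new data polynomial. An alternative would be to plug $\widetilde{Q}(t)=t^{n}Q(1/t)$ directly into the left-hand side of (\ref{ode1}) and expand using the second derivative; this works but is noticeably messier and hides the origin of the factor $t^{n-m-2}$.
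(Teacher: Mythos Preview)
Your proof is correct. The reformulation (\ref{log}) via the logarithmic-type quantity $L_{Q}=tQ'/Q$ is a genuine simplification: once one has $L_{\widetilde{Q}}(t)=n-L_{Q}(1/t)$, the rest is a one-line chain rule and the bookkeeping of powers of $t$ is transparent. The paper's own argument is the direct route you mention at the end---plugging $\widetilde{Q}(t)=t^{n}Q(1/t)$ into (\ref{ode1}) and expanding with the chain rule---and simply asserts that ``the calculation is elementary.'' Your approach and the paper's are equivalent in content, but your packaging via (\ref{log}) explains \emph{why} the factor $t^{n-m-2}$ appears (it is exactly $t^{n}/(t^{2}t^{m})$ from the ratio $\widetilde{Q}/\widetilde{S}$ and the $t^{-2}$ from the change of variable), whereas the direct expansion tends to produce many cross terms that must be collected by hand before this structure emerges.
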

\begin{proof}
One uses the chain rules to prove the statement while the calculation is elementary.
\end{proof}

This Proposition implies that

\begin{cor}
Let $S(t)$ be a complex polynomial of degree $m.$ Suppose that (\ref{ode1}) has a polynomial solution $Q(t)$ of degree $m+2$ for the data $S(t).$ Then $\widetilde{Q}(t)$ solves (\ref{ode1}) for the data $\widetilde{S}(t).$
\end{cor}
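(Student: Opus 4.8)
The plan is to deduce this Corollary directly from the preceding Proposition. That Proposition asserts that whenever $Q(t)$ is a polynomial solution of (\ref{ode1}) of degree $n$ for data $S(t)$ with $\deg S = m$, then $\widetilde{Q}(t)$ solves (\ref{ode1}) for the data $t^{n-m-2}\widetilde{S}(t)$. First I would specialize to $n = m+2$: the hypothesis of the Corollary is precisely the hypothesis of the Proposition in this case, and the exponent $n - m - 2$ becomes $0$, so $t^{n-m-2}\widetilde{S}(t) = \widetilde{S}(t)$. This yields the assertion immediately.

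For completeness I would also indicate the one computation underlying the Proposition, since it is the only substantive ingredient. Write $R(t) = \widetilde{Q}(t) = t^{n} Q(1/t)$ and set $u = 1/t$. Differentiating gives $R'(t) = n t^{n-1} Q(u) - t^{n-2} Q'(u)$ and $tR''(t) + R'(t) = n^{2} t^{n-1} Q(u) - (2n-1) t^{n-2} Q'(u) + t^{n-3} Q''(u)$. Substituting these into the left side of (\ref{ode1}), the terms proportional to $t^{2n-1}Q(u)^{2}$ cancel and the terms proportional to $t^{2n-2}Q(u)Q'(u)$ combine, leaving
\begin{equation*}
(tR''+R')R - t(R')^{2} = t^{2n-3}\left(tQ(u)Q'(u) + Q(u)Q''(u) - Q'(u)^{2}\right).
\end{equation*}
Multiplying the parenthesized expression by $u$ and recognizing the left side of (\ref{ode1}) for $Q$ evaluated at $u$, the hypothesis $(uQ''(u)+Q'(u))Q(u) - u Q'(u)^{2} = S(u)Q(u)$ shows the parenthesis equals $t\,S(1/t)\,Q(1/t)$.

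Finally I would unwind the tilde normalizations: $S(1/t) = t^{-m}\widetilde{S}(t)$ and $Q(1/t) = t^{-n}R(t)$, so the right side becomes $t^{2n-2}\cdot t^{-m}\widetilde{S}(t)\cdot t^{-n}R(t) = t^{n-m-2}\widetilde{S}(t)\,R(t)$, which is exactly the Proposition, and for $n = m+2$ reduces to $\widetilde{S}(t)\widetilde{Q}(t)$. I expect essentially no obstacle here; the only point requiring mild care is bookkeeping the powers of $t$ when passing between $Q,\widetilde{Q},S,\widetilde{S}$ under $t \mapsto 1/t$, and noting that the identity obtained is an identity of Laurent polynomials which clears denominators to a genuine polynomial identity, so no convergence or invertibility issue intervenes.
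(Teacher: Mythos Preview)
Your proposal is correct and follows exactly the paper's approach: the Corollary is immediate from the preceding Proposition by specializing $n=m+2$ so that $t^{n-m-2}\widetilde{S}(t)=\widetilde{S}(t)$. You additionally supply the chain-rule computation that the paper merely declares ``elementary'' and omits; your bookkeeping of the powers of $t$ is correct and yields precisely $t^{n-m-2}\widetilde{S}(t)R(t)$.
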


\section{A formal nonlinear Partial Differential Equation}\label{np}
Let $M_{n}(\mb C)$ be the algebra of $n\times n$ complex matrices. For each $n\geq 1,$ we consider the algebra monomorphism $\psi_{n,n+1}:M_{n}(\mb C)\to M_{n+1}(\mb C)$ defined by
$$\psi_{n,n+1}(A)=
\left[\begin{array}{cc}
A & 0\\
0 & 0
\end{array}
\right].$$
The direct limit of the directed system $\{(M_{n}(\mb C),\psi_{n,m})\}$ is denoted by $M_{\infty}(\mb C)$ where the algebra monomorphism $\psi_{n,m}:M_{n}(\mb C)\to M_{m}(\mb C)$ for $n<m$ is defined by $$\psi_{n,m}=\psi_{m,m-1}\circ\cdots\circ\psi_{n+1,n}.$$
Denote the canonical map $M_{n}(\mb C)\to M_{\infty}(\mb C)$ by $\psi_{n}$ and identify $M_{n}(\mb C)$ with its image in $M_{\infty}(\mb C).$ Then $M_{\infty}(\mb C)$ can be realized as a union $\bigcup_{n=1}^{\infty}M_{n}(\mb C);$ $M_{\infty}(\mb C)$ is an ind-variety over $\mb C.$

By an ind-variety over a field $k,$ we mean that a set $X$ together with a filtration $X_{0}\subset X_{1}\subset X_{2}\subset\cdots$ such that $\bigcup_{n\geq 0}X_{n}=X$ and each $X_{n}$ is a finite dimensional variety over $k$ such that the inclusion $X_{n}\to X_{n+1}$ is a closed embedding. An ind-variety has a natural topology defined as follows. A subset $U$ of $X$ is said to be open if and only if $U\cap X_{n}$ is open in $X_{n}$ for each $n\geq 0.$ The ring of regular functions on $X$ denoted by $k[X]$ is defined to be $k[X]=\varprojlim_{n}k[X_{n}].$ An ind-variety is said to be projective, resp. affine, if each $X_{n}$ is projective, resp. affine. For more details about ind-varieties, see \cite{sha}.

For each $A\in M_{\infty}(\mb C),$ we may write $A=(a_{ij})_{i,j=1}^{\infty}$ with $a_{ij}=0$ for all but finitely many $i,j.$ We associate to $A$ a complex polynomial $\mk p(A)(x,y)$ in $x,y$ by
$$\mk p(A)(x,y)=\sum_{i,j=0}^{\infty}a_{i+1,j+1}x^{i}y^{j}.$$
We obtain a linear monomorphism $\mk p:M_{\infty}(\mb C)\to \mb C[x,y].$ The image of $\mk p$ is denoted by $\mk P_{\infty}[x,y].$ Given $\sigma\in \mk P_{\infty}[x,y],$ we would like to solve for the formal nonlinear differential equation (\ref{1}) in $\mk P_{\infty}[x,y].$ To solve for (\ref{1}) in $\mk P_{\infty}[x,y],$ let us assume that
$$u(x,y)=\sum_{\alpha,\beta=0}^{\infty}a_{\alpha+1,\beta+1}x^{\alpha}y^{\beta}\mbox{ and }\sigma(x,y)=\sum_{i,j=0}^{\infty}c_{i+1,j+1}x^{i}y^{j}.$$ By simple computation,
\begin{align*}
uu_{xy}-u_{x}u_{y}&=\sum_{\alpha,\beta=0}^{\infty}\left(\sum_{i=0}^{\alpha+1}\sum_{j=0}^{\beta+1}i(2j-\beta-1)a_{i+1,j+1}a_{\alpha-i+2,\beta-j+2}\right)x^{\alpha}y^{\beta}\\
\sigma u &=\sum_{\alpha,\beta=0}^{\infty}\left(\sum_{i=0}^{\alpha}\sum_{j=0}^{\beta}a_{i+1,j+1}c_{\alpha-i+1,\beta-j+1}\right)x^{\alpha}y^{\beta}
\end{align*}
Then $u$ solves (\ref{1}) if and only if
$$\sum_{i=0}^{\alpha+1}\sum_{j=0}^{\beta+1}i(2j-\beta-1)a_{i+1,j+1}a_{\alpha-i+2,\beta-j+2}=\sum_{i=0}^{\alpha}\sum_{j=0}^{\beta}a_{i+1,j+1}c_{\alpha-i+1,\beta-j+1}$$
for any $\alpha,\beta\geq 0.$ For each $\alpha,\beta,$ we define
$$\varphi_{\sigma}^{\alpha,\beta}(A)=\sum_{i=0}^{\alpha+1}\sum_{j=0}^{\beta+1}i(2j-\beta-1)a_{i+1,j+1}a_{\alpha-i+2,\beta-j+2}-\sum_{i=0}^{\alpha}\sum_{j=0}^{\beta}a_{i+1,j+1}c_{\alpha-i+1,\beta-j+1}.$$
Then $u=\mk p(A)$ for some $A\in M_{\infty}(\mb C)$ solves (\ref{1}) for data $\sigma$ if and only if $\varphi_{\sigma}^{\alpha,\beta}(A)=0$ for all $\alpha,\beta,$ i.e. $A$ satisfies a family of quadratic polynomials. The subset
$$V_{\sigma}=\{A\in M_{\infty}(\mb C):\varphi_{\sigma}^{\alpha,\beta}(A)=0\}$$
of $M_{\infty}(\mb C)$ is called the ind-affine algebraic variety associated with $\sigma.$ The equation (\ref{1}) has a solution for $\sigma$ if and only if $V_{\sigma}$ is nonempty.

For each $u\in \mk P_{\infty}[x,y],$ we define $M_{xy}u$ by $$(M_{xy}u)(x,y)=(xy)u(x,y).$$ Then $M_{xy}$ defines a linear endomorphism on $\mk P_{\infty}[x,y].$

\begin{lem}
Suppose that $u\in \mk P_{\infty}[x,y]$ is a solution to (\ref{1}) for data $\sigma.$ Then $M_{xy}u$ is a solution to (\ref{1}) for data $M_{xy}\sigma.$
\end{lem}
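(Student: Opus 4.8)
The plan is to verify the claim by direct substitution, exploiting the product-rule structure of the left-hand side of \eqref{1}. Write $v = M_{xy}u$, so that $v(x,y) = xy\,u(x,y)$. I would compute the first and second mixed partials of $v$ using the Leibniz rule: $v_x = y\,u + xy\,u_x$, $v_y = x\,u + xy\,u_y$, and $v_{xy} = u + x\,u_x + y\,u_y + xy\,u_{xy}$. The goal is then to show
$$
v\,v_{xy} - v_x\,v_y = (xy)\,\sigma\,v,
$$
which upon dividing by the common factor $xy$ (legitimate at the level of polynomials, since every term will carry it) should reduce to the hypothesis $u\,u_{xy} - u_x u_y = \sigma u$.

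The key computation is to expand $v\,v_{xy} - v_x v_y$. We have
$$
v\,v_{xy} = xy\,u\,(u + x u_x + y u_y + xy\,u_{xy}) = xy\,u^2 + x^2 y\, u u_x + x y^2\, u u_y + x^2 y^2\, u\,u_{xy},
$$
and
$$
v_x v_y = (y u + xy u_x)(x u + xy u_y) = xy\,u^2 + x^2 y^2\, u u_y + x^2 y^2\, u u_x \cdot \tfrac{1}{y}\cdot y + \cdots
$$
— more carefully, $v_x v_y = xy\,u^2 + x y^2\cdot x\, u u_y + x^2 y\cdot y\, u u_x + x^2 y^2\, u_x u_y = xy\,u^2 + x^2 y^2\,u u_y + x^2 y^2\, u u_x + x^2 y^2\, u_x u_y$. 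Wait — I must be careful with the monomial bookkeeping here; the honest route is to just collect terms. Subtracting, the $xy\,u^2$ terms cancel, and I expect the cross terms $x^2 y\,u u_x$ and $x y^2\, u u_y$ to combine correctly so that what survives is $x^2 y^2(u\,u_{xy} - u_x u_y)$. Then by the hypothesis this equals $x^2 y^2\,\sigma u = (xy)\,\sigma\,(xy\,u) = (xy)\,\sigma\,v = (M_{xy}\sigma)\,v$, which is exactly \eqref{1} for $v$ with data $M_{xy}\sigma$.

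The only genuine point requiring care — and the place I would slow down — is the monomial bookkeeping in the expansion of $v_x v_y$: making sure the terms linear in one partial (the $x^2 y\,u u_x$ and $xy^2\,u u_y$ pieces coming from $v\,v_{xy}$) are matched by the corresponding pieces of $v_x v_y$ so they cancel cleanly, leaving only the $x^2y^2$-homogeneous remainder. I would also note the trivial but necessary fact that $v = M_{xy}u$ indeed lies in $\mk P_{\infty}[x,y]$: multiplication by $xy$ shifts the coefficient matrix of $u$ down and to the right by one index, which preserves the property of having only finitely many nonzero entries, so $M_{xy}$ is well-defined as an endomorphism of $\mk P_{\infty}[x,y]$ (as already asserted before the lemma). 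No deeper machinery from the earlier sections is needed; this is purely the chain/product rule plus the algebraic identity.
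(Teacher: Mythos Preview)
Your approach is correct and is exactly the one the paper takes: compute $v_x$, $v_y$, $v_{xy}$ by the product rule and verify that $vv_{xy}-v_xv_y=(xy)^2(uu_{xy}-u_xu_y)=(M_{xy}\sigma)v$. Your intermediate expansion of $v_xv_y$ contains a slip (the cross terms are $xy^2\,uu_y$ and $x^2y\,uu_x$, not $x^2y^2\,uu_y$ and $x^2y^2\,uu_x$), but with that corrected the cancellation goes through precisely as you describe, matching the paper's one-line computation.
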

\begin{proof}
Let $v=M_{xy}u.$ Then $v(x,y)=(xy)u(x,y).$ Hence $v_{x}=yu+(xy)u_{x},$ and $v_{y}=xu+(xy)u_{y},$ and $v_{xy}=u+yu_{y}+xu_{x}+(xy)u_{xy}.$ We discover that
$$vv_{xy}-v_{x}v_{y}=(xy)^{2}(uu_{xy}-u_{x}u_{y})=(xy)^{2}\sigma u=(M_{xy}\sigma) v.$$
This proves our assertion.
\end{proof}

By making use of the fact that $\mb C[x,y]$ is a unique factorization domain, we prove the following fact:

\begin{prop}
Let $v\in \mk P_{\infty}[x,y]$ be a solution to (\ref{1}) for a data $\sigma\in \mk P_{\infty}[x,y].$ Assume that there exists $m\in\mb N$ such that $v$ is divisible by $(xy)^{m}$ but not by $x^{m+1}y^{m}$ and not by $x^{m}y^{m+1}.$ Then $\sigma$ is divisible by $(xy)^{m}.$ Furthermore, if $u\in \mk P_{\infty}[x,y]$ and $\gamma\in \mk P_{\infty}[x,y]$ are polynomials so that $v=M_{xy}^{m}u$ and $\sigma=M_{xy}^{m}\gamma,$ then $u$ is a solution to (\ref{1}) for the data $\gamma.$
\end{prop}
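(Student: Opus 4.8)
The plan is to mimic the one–variable argument that precedes Proposition~\ref{odes} (the reduction of \eqref{ode1} via $Q=t^mQ_1$), adapting Lemma~\ref{lemp} to the ring $\mb C[x,y]$. First I would establish the divisibility claim for $\sigma$. Since $v$ is divisible by $(xy)^m$, write $v=(xy)^mu$ with $u\in\mk P_\infty[x,y]$; the hypotheses that $v$ is \emph{not} divisible by $x^{m+1}y^m$ nor by $x^my^{m+1}$ say precisely that $u$ is divisible by neither $x$ nor $y$, hence (using that $\mb C[x,y]$ is a UFD with $x,y$ prime) $u(0,y)\not\equiv 0$ and $u(x,0)\not\equiv 0$. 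Now substitute $v=(xy)^mu$ into \eqref{1}. Using the Leibniz computations already carried out in the preceding lemma (iterated $m$ times, or directly: $v_x=(xy)^m u_x+m x^{m-1}y^m u$, $v_y=(xy)^m u_y+m x^m y^{m-1}u$, $v_{xy}=(xy)^m u_{xy}+mx^{m-1}y^m u_y+mx^m y^{m-1}u_x+m^2 x^{m-1}y^{m-1}u$), one finds after cancellation
\begin{equation*}
vv_{xy}-v_xv_y=(xy)^{2m}\bigl(uu_{xy}-u_xu_y\bigr),
\end{equation*}
exactly as in the $m=1$ case. Hence \eqref{1} for $v$ with data $\sigma$ reads $(xy)^{2m}(uu_{xy}-u_xu_y)=\sigma v=(xy)^m\sigma u$, i.e. $(xy)^m u(uu_{xy}-u_xu_y)=\sigma u$ after dividing by $(xy)^m$ (valid since $\mb C[x,y]$ is a domain). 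Actually it is cleaner to keep $(xy)^{2m}(uu_{xy}-u_xu_y)=(xy)^m\sigma u$ and cancel $(xy)^m$: $(xy)^m(uu_{xy}-u_xu_y)=\sigma u$.

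The core step is then a two‑variable analogue of Lemma~\ref{lemp}: if $g\in\mb C[x,y]$ is divisible by neither $x$ nor $y$, and $fg$ is divisible by $(xy)^m$, then $f$ is divisible by $(xy)^m$. This follows from unique factorization: $x$ and $y$ are non‑associate primes in $\mb C[x,y]$, so $x^m\mid fg$ together with $x\nmid g$ forces $x^m\mid f$, and symmetrically $y^m\mid f$; since $x^m$ and $y^m$ are coprime, $x^my^m=(xy)^m\mid f$. Applying this with $g=u$ (which we showed is divisible by neither $x$ nor $y$) and $f=uu_{xy}-u_xu_y$, from $(xy)^m\mid (xy)^m(uu_{xy}-u_xu_y)=\sigma u$ we get $(xy)^m\mid \sigma u$, and then the same lemma applied to $\sigma u$ with $g=u$ gives $(xy)^m\mid\sigma$. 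This proves the first assertion; write $\sigma=(xy)^m\gamma=M_{xy}^m\gamma$ for a unique $\gamma\in\mb C[x,y]$, and note $\gamma\in\mk P_\infty[x,y]$ since $\mk p$ is onto its image $\mk P_\infty[x,y]$ and the coefficients of $\gamma$ are a shift of those of $\sigma$.

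For the final claim, substitute $\sigma=(xy)^m\gamma$ back into $(xy)^m(uu_{xy}-u_xu_y)=\sigma u=(xy)^m\gamma u$ and cancel $(xy)^m$ (again using that $\mb C[x,y]$ is a domain), obtaining $uu_{xy}-u_xu_y=\gamma u$, which is exactly \eqref{1} for $u$ with data $\gamma$. One subtlety worth a sentence: one should confirm $u\in\mk P_\infty[x,y]$, i.e. that $u$ really is in the image of $\mk p$; this is immediate because $v=(xy)^mu$ lies in $\mk P_\infty[x,y]$ and dividing a polynomial by $(xy)^m$ just re‑indexes its coefficient array, which stays finitely supported. I expect the only mildly delicate point to be the bookkeeping in the Leibniz expansion showing the cross terms cancel to leave the clean factor $(xy)^{2m}$; the divisibility lemma itself is a routine UFD argument, and in fact the paper has essentially already done the $m=1$ case in the lemma immediately preceding this proposition, so one may either iterate that lemma $m$ times or redo the computation once in general. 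No genuine obstacle is anticipated; the proof is a direct transcription of the ODE‑side reduction into the PDE/two‑variable setting.
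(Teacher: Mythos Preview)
Your proposal is correct and follows essentially the same approach as the paper: write $v=(xy)^m u$, compute $vv_{xy}-v_xv_y=(xy)^{2m}(uu_{xy}-u_xu_y)$, equate with $(xy)^m\sigma u$, and then use that $\mb C[x,y]$ is a UFD together with $x\nmid u$, $y\nmid u$ to conclude $(xy)^m\mid\sigma$ and hence $uu_{xy}-u_xu_y=\gamma u$. The paper's proof is nearly identical, only terser in the UFD step (it simply asserts the divisibility of $\sigma$, having flagged the UFD property in the sentence introducing the proposition); your slight redundancy in first ``applying the lemma'' with $f=uu_{xy}-u_xu_y$ before the genuine application with $f=\sigma$ is harmless but unnecessary, since $(xy)^m\mid\sigma u$ is immediate from the equation itself.
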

\begin{proof}
Since $v$ is divisible by $(xy)^{m},$ we write $v=M_{xy}^{m}u$ for some $u\in \mk P_{\infty}[x,y].$ We can show that
$$vv_{xy}-v_{x}v_{y}=(xy)^{2m}(uu_{xy}-u_{x}u_{y}).$$
Since $vv_{xy}-v_{x}v_{y}=\sigma v=(xy)^{m}\sigma u,$ we find
$$\sigma u=(xy)^{m}(uu_{xy}-u_{x}u_{y}).$$
Since $v$ is not divisible by $x^{m+1}y^{m}$ and not by $x^{m}y^{m+1},$ $u$ is not divisible by $x$ and $y.$ We see that $\sigma$ is divisible by $(xy)^{m}.$ Let $\sigma=M_{xy}^{m}\gamma$ for $\gamma\in \mk P_{\infty}[x,y].$ Then
$$uu_{xy}-u_{x}u_{y}=\gamma u.$$
This proves our assertion.
\end{proof}

\begin{defn}
A solution $u\in \mk P_{\infty}[x,y]$ to (\ref{1}) for a given data is called a prime solution to (\ref{1}) if $u$ is not divisible by $xy.$
\end{defn}

Let us denote the image of $M_{n}(\mb C)$ in $\mb C[x,y]$ via $\mk p$ by $\mk P_{n}[x,y].$ Then $\mk P_{\infty}[x,y]=\bigcup_{n\geq 1}\mk P_{n}[x,y].$

\begin{lem}
Let $\sigma\in \mk P_{m}[x,y]$ with $\deg \sigma=2m-2.$ If $u\in\mk P_{\infty}[x,y]$ is a solution to (\ref{1}) for the data $\sigma$ of degree $2n-2,$ then $n\geq m+2.$
\end{lem}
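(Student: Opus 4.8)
The plan is to mimic the proof of Lemma~\ref{ld}, comparing the two sides of equation~(\ref{1}) (with $u$ substituted for the unknown), but now keeping track of the $x$-degree and the $y$-degree separately. The one genuinely new ingredient is a cancellation: although $u u_{xy}-u_x u_y$ has $x$-degree \emph{a priori} as large as $2\deg_x u-1$, the coefficient of that top power vanishes identically, so in fact $\deg_x\bigl(u u_{xy}-u_x u_y\bigr)\le 2\deg_x u-2$, and symmetrically in $y$. This one-power-in-each-variable saving is exactly what upgrades the bound from $n\ge m+1$ to $n\ge m+2$ (it is the two-variable counterpart of the cancellation of the $t^{2n-1}$ coefficient implicit in the proof of Lemma~\ref{ld}).

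First I would unpack the hypotheses in terms of partial degrees. Since $\sigma\in\mk P_{m}[x,y]$ we have $\deg_x\sigma\le m-1$ and $\deg_y\sigma\le m-1$; because the total degree of a polynomial never exceeds the sum of its two partial degrees, the assumption $\deg\sigma=2m-2$ forces $\deg_x\sigma=\deg_y\sigma=m-1$. In the same way I read the hypothesis $\deg u=2n-2$ as $\deg_x u=\deg_y u=n-1$ (and if it is instead meant to encode $\deg_x u+\deg_y u=2n-2$, the same conclusion follows by adding the two partial-degree inequalities obtained below). Before the main estimate I would rule out the degenerate possibility $\deg_x u=0$: then $u_x=0$, hence $u u_{xy}-u_x u_y=0$, hence $\sigma u=0$; since $\mb C[x,y]$ is a domain and $\sigma\neq0$ (its degree is $2m-2\ge0$), this would force $u=0$, contradicting $\deg u=2n-2$. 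Thus $\deg_x u\ge1$, and by symmetry $\deg_y u\ge1$.

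Now the cancellation. Writing $u=\sum_{i=0}^{p}A_{i}(y)x^{i}$ with $p=\deg_x u\ge1$ and $A_{p}\neq0$, one has $u_x=\sum_{i=0}^{p-1}(i+1)A_{i+1}x^{i}$, $u_y=\sum_{i=0}^{p}A_{i}'x^{i}$, and $u_{xy}=\sum_{i=0}^{p-1}(i+1)A_{i+1}'x^{i}$. In the product $u u_{xy}$ the only index pair with $i+j=2p-1$, $0\le i\le p$ and $0\le j\le p-1$ is $(i,j)=(p,p-1)$, so the coefficient of $x^{2p-1}$ there is $pA_{p}A_{p}'$; in $u_x u_y$ the only pair with $i+j=2p-1$, $0\le i\le p-1$ and $0\le j\le p$ is $(i,j)=(p-1,p)$, giving again $pA_{p}A_{p}'$. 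Hence the $x^{2p-1}$ coefficients cancel and $\deg_x\bigl(u u_{xy}-u_x u_y\bigr)\le 2p-2$. By (\ref{1}) and the fact that $\mb C[y][x]$ is a domain, $2p-2\ge\deg_x(\sigma u)=\deg_x\sigma+\deg_x u=(m-1)+p$, so $p\ge m+1$; the identical argument in the $y$-variable gives $\deg_y u\ge m+1$. With $\deg_x u=\deg_y u=n-1$ this yields $n-1\ge m+1$, i.e.\ $n\ge m+2$.

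The only step that needs real care is the cancellation identity: it is a short index computation, but one must verify that the admissible summation ranges genuinely isolate exactly one term on each side, so that nothing else survives at degree $2p-1$. The rest — reduction to partial degrees, the degenerate case, and the degree comparison — is routine bookkeeping paralleling Lemma~\ref{ld}.
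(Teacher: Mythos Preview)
Your proof is correct and follows essentially the same degree-counting strategy as the paper, with only a cosmetic difference in bookkeeping: you bound the $x$-degree and $y$-degree of $uu_{xy}-u_xu_y$ separately (checking one top-coefficient cancellation in each variable), whereas the paper works directly with total degree and observes that the three monomials $x^{2n-3}y^{2n-3}$, $x^{2n-3}y^{2n-4}$, $x^{2n-4}y^{2n-3}$ all cancel, so $\deg(uu_{xy}-u_xu_y)\le 4n-8$ while $\deg(\sigma u)=2n+2m-4$. Both routes encode the same cancellation phenomenon and yield the same inequality; your variable-by-variable version is a mild repackaging rather than a genuinely different argument.
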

\begin{proof}
We observe that the coefficients of $x^{2n-3}y^{2n-3}$ and of $x^{2n-3}y^{2n-4}$ and of $x^{2n-4}y^{2n-3}$ in $uu_{xy}-u_{x}u_{y}$ all vanish. Then $uu_{xy}-u_{x}u_{y}$ is a polynomial of degree at most $4n-8.$ On the other hand, the degree of $\sigma u$ is $2n+2m-4.$ We conclude that $n\geq m+2.$
\end{proof}

Let us write a remark that $V_{\sigma}$ is an ind-affine variety. Given $\sigma\in \mk P_{m}[x,y]$ with degree $2m-2,$ the intersection $V_{\sigma}^{n}=V_{\sigma}\cap M_{n}(\mb C)$ is an affine algebraic subvariety of $M_{n}(\mb C)\cong\mb A^{n^{2}}(\mb C)$ for $n\geq m+2$ and $V_{\sigma}=\bigcup_{n\geq m+2}V_{\sigma}^{n}.$ \\

Let $q\in \mk P_{n}[x,y].$ Formally, we define
$$\widetilde{q}(x,y)=(xy)^{n}q(x^{-1},y^{-1}).$$

\begin{lem}
Let $\sigma\in \mk P_{m}[x,y]$ be given with $\deg\sigma=2m-2.$ If $u\in \mk P_{n}[x,y]$ is a solution to (\ref{1}) for data $\sigma,$ then $\widetilde{u}$ is a solution to (\ref{1}) with data $M_{xy}^{n-m-2}\widetilde{\sigma}.$
\end{lem}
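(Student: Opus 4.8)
The plan is to imitate the structure of the one-variable ``reciprocal polynomial'' statement (the Proposition on $\widetilde Q$ and $t^{n-m-2}\widetilde S$) in the two-variable setting, replacing the substitution $t\mapsto t^{-1}$ by the substitution $(x,y)\mapsto(x^{-1},y^{-1})$ and the bookkeeping factor $t^{\deg}$ by $(xy)^{n}$ in $x$ and $y$ separately. First I would record the elementary transformation laws for $\widetilde{\,\cdot\,}$ under the differential operators appearing in \eqref{1}. Writing $v(x,y)=u(x^{-1},y^{-1})$, the chain rule gives $v_{x}=-x^{-2}u_{x}(x^{-1},y^{-1})$, $v_{y}=-y^{-2}u_{y}(x^{-1},y^{-1})$ and $v_{xy}=x^{-2}y^{-2}u_{xy}(x^{-1},y^{-1})$, so that the combination $v v_{xy}-v_{x}v_{y}$ equals $x^{-2}y^{-2}$ times $(u u_{xy}-u_{x}u_{y})$ evaluated at $(x^{-1},y^{-1})$. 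This is the exact analogue of the identity used in the proof that $\widetilde Q$ solves \eqref{ode1} for $t^{n-m-2}\widetilde S$, and it is the computational heart of the argument.

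Next I would assemble these pieces with the correct powers of $xy$. Since $\widetilde u(x,y)=(xy)^{n}u(x^{-1},y^{-1})$, one computes directly that
\[
\widetilde u\,\widetilde u_{xy}-\widetilde u_{x}\widetilde u_{y}=(xy)^{2n}\bigl(vv_{xy}-v_{x}v_{y}\bigr)+\text{(lower-order cross terms)},
\]
but in fact, because $u u_{xy}-u_{x}u_{y}$ is a \emph{bilinear} expression in the ``Euler-type'' operators, the cross terms organize themselves and the net effect is
\[
\widetilde u\,\widetilde u_{xy}-\widetilde u_{x}\widetilde u_{y}=(xy)^{2n-2}\,\bigl(u u_{xy}-u_{x}u_{y}\bigr)(x^{-1},y^{-1}).
\]
(This mirrors the fact that in the one-variable statement the bookkeeping power is $2n-2$, not $2n$, coming from Lemma~2.2 or rather the degree count: $u u_{xy}-u_{x}u_{y}$ has degree at most $2n-4$, so $(xy)^{2n-2}$ times its value at $(x^{-1},y^{-1})$ is a genuine polynomial.) Here I would be careful to verify this ``miraculous'' cancellation of the extra $xy$ factors explicitly rather than by analogy — matching monomials $x^{\alpha}y^{\beta}$ on both sides using the coefficient formula for $uu_{xy}-u_{x}u_{y}$ already displayed in Section~\ref{np}.

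Finally, since $u$ solves \eqref{1} for data $\sigma$, the right side is $(xy)^{2n-2}\,\sigma(x^{-1},y^{-1})\,u(x^{-1},y^{-1})$. Using $\deg\sigma=2m-2$ we have $\widetilde\sigma(x,y)=(xy)^{2m-2}\sigma(x^{-1},y^{-1})$ and $\widetilde u(x,y)=(xy)^{n}u(x^{-1},y^{-1})$, so
\[
(xy)^{2n-2}\,\sigma(x^{-1},y^{-1})\,u(x^{-1},y^{-1})
=(xy)^{2n-2-(2m-2)-n}\,\widetilde\sigma(x,y)\,\widetilde u(x,y)
=(xy)^{n-2m}\cdot(xy)^{?}\ \widetilde\sigma\,\widetilde u,
\]
and reconciling the exponents gives precisely the factor $(xy)^{n-m-2}$, i.e. $\widetilde u\,\widetilde u_{xy}-\widetilde u_{x}\widetilde u_{y}=\bigl(M_{xy}^{n-m-2}\widetilde\sigma\bigr)\widetilde u$, which is \eqref{1} for the data $M_{xy}^{n-m-2}\widetilde\sigma$ as claimed. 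I expect the main obstacle to be the precise exponent bookkeeping in the middle step: making sure the ``extra'' powers of $xy$ produced by differentiating $(xy)^{n}$ really do cancel against the $x^{-2},y^{-2}$ weights from the chain rule, and that the leftover power matches $n-m-2$ on the nose (and in particular is $\geq 0$, which is guaranteed by the previous lemma stating $n\geq m+2$). Once that is pinned down by a direct monomial comparison, the rest is the elementary chain-rule calculation, exactly as in the one-variable analogue whose proof the authors left to the reader.
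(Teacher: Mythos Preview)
Your approach is exactly the paper's: compute $\widetilde u_{x},\widetilde u_{y},\widetilde u_{xy}$ by the product and chain rules, verify the key identity
\[
\widetilde u\,\widetilde u_{xy}-\widetilde u_{x}\widetilde u_{y}=(xy)^{2n-2}\bigl(uu_{xy}-u_{x}u_{y}\bigr)(x^{-1},y^{-1}),
\]
and then peel off the right powers of $xy$. The paper does the first step in one pass by differentiating $v(x,y)=(xy)^{n}u(x^{-1},y^{-1})$ directly (product rule on $(xy)^{n}$ times the chain rule on $u$), and the four cross terms in $vv_{xy}$ cancel termwise against those in $v_{x}v_{y}$; there is no need for a monomial-by-monomial comparison.

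However, your final bookkeeping step contains a genuine slip that you yourself flag with ``$(xy)^{?}$'' and then paper over. The paper's definition of $\widetilde{\,\cdot\,}$ for $q\in\mk P_{k}[x,y]$ is $\widetilde q(x,y)=(xy)^{k}q(x^{-1},y^{-1})$, with exponent the \emph{index} $k$, not the total degree $2k-2$. Thus $\widetilde\sigma(x,y)=(xy)^{m}\sigma(x^{-1},y^{-1})$, not $(xy)^{2m-2}\sigma(x^{-1},y^{-1})$. With the correct exponent the split is
\[
(xy)^{2n-2}\sigma(x^{-1},y^{-1})u(x^{-1},y^{-1})=(xy)^{\,2n-2-m-n}\,\widetilde\sigma\,\widetilde u=(xy)^{n-m-2}\,\widetilde\sigma\,\widetilde u,
\]
which lands on $M_{xy}^{n-m-2}\widetilde\sigma$ on the nose. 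With your exponent $2m-2$ you get $(xy)^{n-2m}$, which does not match $n-m-2$ except when $m=2$; the discrepancy is not a cancellation you missed but a wrong definition. Once you correct $\widetilde\sigma$, your argument is complete and coincides with the paper's.
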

\begin{proof}
Let $v=\widetilde{u}.$ Then $v(x,y)=(xy)^{n}u(x^{-1},y^{-1}).$ Then
\begin{align*}
v_{x} &=nx^{n-1}y^{n}u(x^{-1},y^{-1})-x^{n-2}y^{n}u_{x}(x^{-1},y^{-1})\\
v_{y} &=nx^{n}y^{n-1}u(x^{-1},y^{-1})-x^{n}y^{n-2}u_{y}(x^{-1},y^{-1})\\
v_{xy} &=n^{2}x^{n-1}y^{n-1}u(x^{-1},y^{-1})-nx^{n-1}y^{n-2}u_{y}(x^{-1},y^{-1})\\
           &-nx^{n-2}y^{n-1}u_{x}(x^{-1},y^{-1})+x^{n-2}y^{n-2}u_{xy}(x^{-1},y^{-1}).
\end{align*}
This implies that
\begin{align*}
vv_{xy}-v_{x}v_{y}&=(xy)^{2n-2}(u(x^{-1},y^{-1})u_{xy}(x^{-1},y^{-1})-u_{x}(x^{-1},y^{-1})u_{y}(x^{-1},y^{-1}))\\
&=(xy)^{2n-2}\sigma(x^{-1},y^{-1}) u(x^{-1},y^{-1})\\
&=(xy)^{n-m-2}(xy)^{m}\sigma(x^{-1},y^{-1})\cdot (xy)^{n}u(x^{-1},y^{-1})\\
&=M_{xy}^{n-m-2}\widetilde{\sigma}(x,y)v(x,y)
\end{align*}
This proves our assertion.
\end{proof}

This lemma leads to:

\begin{cor}
Let $\sigma\in \mk P_{m}[x,y]$ be given with $\deg \sigma=2m-2$. If $u\in \mk P_{m+2}[x,y]$ is a solution to (\ref{1}) for data $\sigma,$ then $\widetilde{u}$ is a solution to (\ref{1}) with data $\widetilde{\sigma}.$
\end{cor}

Apparently, it is not simple to determine whether the set $V_{\sigma}$ is empty or not. For the main purpose of this paper, we give only a partial solution to this question. \\

A polynomial $u$ in $\mk P_{\infty}[x,y]$ is called diagonal if $u=\mk p(A)$ for some diagonal matrix $A\in M_{\infty}(\mb C).$ If a polynomial $u$ is diagonal, we can find a polynomial $Q(t)\in\mb C[t]$ such that $u(x,y)=Q(xy).$ Here comes a natural question: given a diagonal polynomial $\sigma$ as a data of (\ref{1}), can we find a solution $u$ to (\ref{1}) such that $u$ is also diagonal. %To study this question, let us introduce a sequence of polynomials.
From now on, we only consider prime solutions to (\ref{1}).

\begin{thm}\label{thmd}
Let $\sigma\in\mk  P_{m}[x,y]$ be a diagonal polynomial of degree $2m-2$ with $\sigma(x,y)=S(xy)$ for some $S(t)\in\mb C[t].$ Then (\ref{1}) has a solution $u$ that is also diagonal if and only if there exists $N\in\mb N$ with $N\geq m+2$ such that the family of polynomial $\{f_{S}^{i}:N+1\leq i\leq 2N-1\}$ has a nonzero common root. Furthermore, if $N=m+2,$ then $u\in\mk  P_{m+2}[x,y]$ with $\deg u=2m+2.$
\end{thm}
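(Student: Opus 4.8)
The plan is to reduce the theorem to the one--variable theory of Section \ref{no} via the substitution $t=xy$. Indeed, if $u\in\mk P_\infty[x,y]$ is diagonal, write $u(x,y)=Q(xy)$ with $Q(t)\in\mb C[t]$, as in the remark preceding the theorem. With $t=xy$ one has $u_x=yQ'(t)$, $u_y=xQ'(t)$ and $u_{xy}=Q'(t)+tQ''(t)$, whence
\[
uu_{xy}-u_xu_y=(tQ''(t)+Q'(t))Q(t)-t(Q'(t))^2,\qquad \sigma u=S(t)Q(t),
\]
so that $u$ solves (\ref{1}) for the data $\sigma(x,y)=S(xy)$ if and only if $Q$ solves (\ref{ode1}) for the data $S$. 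Writing $Q(t)=\sum_{i\ge 0}q_it^i$, the polynomial $u=\sum_iq_ix^iy^i$ equals $\mk p(\diag(q_0,\cdots,q_n))$ when $\deg Q=n$; hence $u\in\mk P_{n+1}[x,y]$, $\deg u=2n$, and $u$ is prime (not divisible by $xy$) precisely when $q_0=Q(0)\neq 0$. Thus diagonal prime solutions of (\ref{1}) for $\sigma$ are in bijection with polynomial solutions $Q$ of (\ref{ode1}) for $S$ having $Q(0)\neq 0$.

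From $\deg\sigma=2m-2$ and $\sigma(x,y)=S(xy)$ we get $\deg S=m-1$. By Proposition \ref{odes} the solutions $Q$ of (\ref{ode1}) with $Q(0)\neq 0$ are exactly the series with $q_i=f_S^i(a)$ for $a=Q(0)^{-1}\in\mb C^{*}$, and by Proposition \ref{propf} (applied with the integer there equal to $\deg S=m-1$) such a $Q$ is a polynomial if and only if $a$ is a common root of $\{f_S^i:N+1\le i\le 2N-1\}$ for some $N\ge m+1$. Using the divisibility lemma preceding Corollary \ref{cor1} — a common root of $\{f_S^i:N+1\le i\le 2N-1\}$ is a common root of the whole tail $\{f_S^i:i\ge N+1\}$, hence of the corresponding block for every larger $N$ — one may replace ``some $N\ge m+1$'' by ``some $N\ge m+2$'' (a common root at $N=N_0$ is one at $N=\max(N_0,m+2)$). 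Combining this with the bijection of the first paragraph, and observing that the common root is forced to be nonzero exactly because it equals $Q(0)^{-1}$ for the prime solution $u(x,y)=Q(xy)$ (and conversely any nonzero common root produces such a $Q$), yields the stated equivalence.

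For the size claim, suppose the equivalence is realized with $N=m+2$, and let $a$ be a nonzero common root of $\{f_S^i:m+3\le i\le 2m+3\}$ with $Q$, $u(x,y)=Q(xy)$ the associated solutions. The analogue of Lemma \ref{ld} for (\ref{1}), established earlier in this section, gives $\deg u\ge 2(m+2)-2=2m+2$. For the reverse inequality one invokes Corollary \ref{cor1}: with $\deg S=m-1$ it asserts that once $a=Q(0)^{-1}$ is a common zero of the block $\{f_S^i:m+2\le i\le 2m+1\}$, the series $Q$ is a polynomial of degree \emph{exactly} $m+1$; the vanishing of that block is to be extracted from the hypothesis together with the divisibility lemma. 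Then $\deg Q=m+1$, so $\deg u=2m+2$ and $u=\mk p(\diag(q_0,\cdots,q_{m+1}))\in\mk P_{m+2}[x,y]$.

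The first two paragraphs are essentially formal once the substitution $t=xy$ is made. The real work is the last paragraph: one must carefully reconcile the index block $\{N+1,\cdots,2N-1\}$ appearing in the theorem (and in Proposition \ref{propf}) with the block $\{(\deg S)+3,\cdots,2(\deg S)+3\}=\{m+2,\cdots,2m+1\}$ built into Corollary \ref{cor1}, and — crucially — extract the exact value $\deg Q=m+1$ rather than merely the bound $\deg Q\le m+2$ that Proposition \ref{propf} gives directly; equivalently, one must verify $f_S^{m+2}(a)=0$ under the hypothesis. This matching of blocks, and the use of the divisibility lemma to pass between consecutive values of $N$, is the delicate point. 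Throughout, the dictionary ``$u$ prime $\Leftrightarrow Q(0)\neq 0\Leftrightarrow$ the common root is nonzero'' must be kept straight.
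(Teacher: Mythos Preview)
Your reduction via $t=xy$ and the treatment of the equivalence are correct and are exactly the paper's approach: the paper computes (\ref{opde}) and then cites Proposition~\ref{propf} for both directions. You are more careful than the paper in one respect: you notice that with $\deg S=m-1$ (since $\deg\sigma=2m-2$), Proposition~\ref{propf} literally gives ``some $N\ge m+1$'', and you correctly use the divisibility lemma to upgrade this to ``some $N\ge m+2$'' as stated. That patch is sound, since a common root of the $N_0$-block is a common root of every larger block.

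The genuine issue is the one you flag in the ``furthermore'' clause and do not resolve. Under the hypothesis $N=m+2$ one only knows $f_S^{i}(a)=0$ for $i\ge m+3$, hence $\deg Q\le m+2$; combined with Lemma~\ref{ld} (applied with $\deg S=m-1$) this gives $\deg Q\in\{m+1,m+2\}$, and nothing in the hypothesis forces $f_S^{m+2}(a)=0$. The divisibility lemma only passes from a block at $N$ to blocks at larger $N$, not smaller, so it cannot manufacture the missing vanishing at index $m+2$. The paper's proof simply writes ``The rest follows from Corollary~\ref{cor1}'', but Corollary~\ref{cor1} with $\deg S=m-1$ pins down $\deg Q=m+1$ only from the block $\{f_S^{i}:m+2\le i\le 2m+1\}$, i.e.\ the $N=m+1$ block; and indeed the paper's worked examples (genus $2$ and $3$) check exactly the $N=m+1$ blocks $\{f_S^4,f_S^5\}$ and $\{f_S^5,f_S^6,f_S^7\}$. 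So the gap you identify is real and is shared by the paper's own argument; it reflects an off-by-one in the statement (the intended condition is $N\ge m+1$, with the ``furthermore'' at $N=m+1$), not a missing idea on your part.
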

\begin{proof}
Assume that $v(x,y)=q(xy)$ for some $q\in \mb C[t].$ Then
\begin{align}\label{opde}
vv_{xy}-v_{x}v_{y}-\sigma v&=(xy)q''(xy)q(xy)+q'(xy)q(xy)\\
&-(xy)(q'(xy))^{2}-S(xy)q(xy)\notag.
\end{align}
If $u(x,y)$ is a diagonal polynomial that solves (\ref{1}) for data $\sigma,$ and if we write $u(x,y)=Q(xy)$ for some $Q(t)\in\mb C[t],$ then by (\ref{opde}), $Q(t)$ solves (\ref{ode1}) for data $S(t)$ with $t=xy.$ Since $Q(t)$ is a polynomial solution to (\ref{ode1}) with data $S(t),$ Proposition \ref{propf} implies the result.

Let us prove the converse. Let $a$ be a nonzero common root of $\{f_{S}^{i}:N+1\leq i\leq 2N-1\}.$ Define $q_{i}$ by $q_{0}=1/a$ and $q_{i}=f_{S}^{i}(a)$ for $i\geq 1.$ By Proposition \ref{propf}, the polynomial $Q(t)=\sum_{i=0}^{\infty}q_{i}t^{i}$ solves (\ref{ode1}) with data $S(t).$ Define $u(x,y)=Q(xy).$ Then $u(x,y)$ is a polynomial. By (\ref{opde}), $u$ solves (\ref{1}) for data $\sigma.$ The rest follows from Corollary \ref{cor1}.
\end{proof}

This theorem enables us to find a class of polynomials $\sigma$ in $\mk P_{\infty}[x,y]$ such that $V_{\sigma}$ is nonempty. It would be interesting to find  criterions to know when $V_{\sigma}$ is nonempty for any $\sigma\in \mk P_{\infty}[x,y].$

\section{An explicit construction of a solution to the mean field equation for hyperelliptic curves}\label{smfe}

Let $H=(h_{ij})_{i,j=1}^{g}$ be a $g\times g$ positive definite hermitian matrix and consider the corresponding canonical metric $ds_{H}^{2}$ on the hyperelliptic curve $X$ of genus $g$ defined in the introduction. If we let $\sigma_{H}(x,y)$ be the complex polynomial $\sigma_{H}(x,y)=\sum_{i,j=1}^{g}h_{ij}x^{i-1}y^{j-1},$ then the canonical metric $ds_{H}^{2}$ on $X$ has the local expression
$$
ds_{H}^{2}=
\begin{cases}
\dsp\frac{\sigma_{H}(x,\bar{x})}{|y^{2}|}dx\otimes d\bar{x}&\mbox{ on $C_{0},$}\\
\dsp\frac{\widetilde{\sigma}_{H}(z,\bar{z})}{|w^{2}|}dz\otimes d\bar{z}&\mbox{ on $C_{0}'.$}
\end{cases}
$$

\begin{thm}\label{pmfe}
Suppose (\ref{1}) has a solution $u=\mk p(A)\in P_{g+1}[x,y]$ for the data $\sigma_{H}$ with $A\in M_{g+1}(\mb C)$ being positive definite. Then the function
$$
\varphi=
\begin{cases}
\dsp\frac{4|f(x)|}{u(x,\bar{x})} &\mbox{on $C_{0},$}\\
\dsp\frac{4|g(z)|}{\widetilde{u}(z,\bar{z})} &\mbox{on $C_{0}'.$}
\end{cases}
$$
is a globally defined nonnegative smooth function whose zero set coincides with the set of Weierstrass points of $X$ and $\psi=\log\varphi$ defines smooth function on $X\setminus\{P_{1},\cdots,P_{2g+2}\}$ satisfying (\ref{MFE})
%\begin{equation}\label{MFE}
%\Delta\psi+e^{\psi}=4\pi\sum_{i=1}^{2g+2}\delta_{P_{i}}.
%\end{equation}
\end{thm}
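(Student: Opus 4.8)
The plan is to verify the three assertions in turn---globality and smoothness of $\varphi$, identification of its zero set, and the mean field equation---by reducing everything to local computations in the two charts $C_0$ and $C_0'$ and invoking the fact that $u$ solves (\ref{1}) for the data $\sigma_H$.

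First I would establish that $\varphi$ is well defined on the overlap $C_0\cap C_0'$. On $U_0$ the change of coordinates is $z=1/x$, $w=y/x^{g+1}$, so $g(z)=\prod(1-e_i/x)=x^{-(2g+2)}\prod(x-e_i)$, and since $f(x)=c\prod(x-e_i)$ up to the leading coefficient, one gets $|g(z)|=|x|^{-(2g+2)}|f(x)|$ up to a positive constant (I would absorb constants by the normalization of $f$, or carry the constant along; it is harmless). Likewise $|w^2|=|y^2|/|x|^{2g+2}$, so $|g(z)|/|w^2|=|f(x)|/|y^2|$. By definition $\widetilde u(z,\bar z)=(z\bar z)^{g+1}u(1/z,1/\bar z)$ evaluated appropriately---here I would use the scaling relation for $\widetilde{(\cdot)}$ from Section \ref{np}, namely that $\widetilde u$ is the matrix-transpose-type reciprocal of $u$---to check that $\widetilde u(z,\bar z)=|x|^{-2g}u(x,\bar x)$ on the overlap. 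Then $|g(z)|/\widetilde u(z,\bar z)=|x|^{-(2g+2)+2g}\cdot|f(x)|/(|x|^{-2g}u(x,\bar x))$... I would compute these exponents carefully; the point is that they match so that the two local formulas agree on $C_0\cap C_0'$, giving a globally defined function. Smoothness on $X$ follows because $A$ is positive definite, hence $u(x,\bar x)=\sum a_{i+1,j+1}x^i\bar x^j>0$ for all $x$ (it is a positive definite Hermitian form evaluated on $(1,x,x^2,\dots)$), so the denominators never vanish; and similarly $\widetilde u>0$ since the reciprocal operation preserves positive definiteness. Thus $\varphi\geq 0$ is smooth, and $\varphi=0$ exactly where $f(x)=0$ (in $C_0$) or $g(z)=0$ (in $C_0'$), i.e. precisely at the Weierstrass points, where $y=0$ resp. $w=0$.

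Next, for the mean field equation on $X\setminus\{P_1,\dots,P_{2g+2}\}$ I would work on $C_0$ minus Weierstrass points, where $\psi=\log\varphi=\log 4+\log|f(x)|-\log u(x,\bar x)=\log 4+\log f(x)+\log\overline{f(x)}-\log u(x,\bar x)$ locally (choosing branches off the zero set). Since $\log f(x)$ is holomorphic there, $\partial_x\partial_{\bar x}\log f(x)=0$, so away from the $P_i$ we get $\partial_x\partial_{\bar x}\psi=-\partial_x\partial_{\bar x}\log u(x,\bar x)=-(u u_{xy}-u_xu_y)/u^2$ with $y$ replaced by $\bar x$---here using the standard identity $\partial_x\partial_{\bar x}\log u = (u\, u_{x\bar x}-u_x u_{\bar x})/u^2$. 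Because $u$ solves (\ref{1}) with data $\sigma_H$, the numerator equals $\sigma_H(x,\bar x)\,u(x,\bar x)$, so $\partial_x\partial_{\bar x}\psi=-\sigma_H(x,\bar x)/u(x,\bar x)$. The Laplacian of the canonical metric is $\Delta_H = \frac{|y^2|}{\sigma_H(x,\bar x)}\cdot 4\partial_x\partial_{\bar x}$ (up to the usual factor of $4$ from $\partial\bar\partial$ versus $\Delta$), so $\Delta_H\psi = -4|y^2|/u(x,\bar x)$, while $e^\psi=\varphi=4|f(x)|/u(x,\bar x)=4|y^2|/u(x,\bar x)$ since $|y^2|=|f(x)|$ on $C_0$. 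Hence $\Delta_H\psi+e^\psi=0$ pointwise on $C_0$ minus Weierstrass points, and the same computation in the $C_0'$ chart (with $\widetilde\sigma_H$, $\widetilde u$, $g$, $w$ in place of $\sigma_H$, $u$, $f$, $y$, using the corollary from Section \ref{np} that $\widetilde u$ solves (\ref{1}) for data $\widetilde\sigma_H$ and that $\widetilde\sigma_H$ is, up to the factor matching $\deg\sigma_H=2g$, the local form of the metric numerator there) covers the rest of $X$.

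Finally I would account for the Dirac masses: near a Weierstrass point $P_k$ a local holomorphic coordinate is provided by $y$ (since $\pi$ is branched there and $x-e_k=y^2/(\text{unit})$), so $|f(x)|$ behaves like $|y|^2$ and $\log\varphi = \log|y|^2 + (\text{smooth})$, whence $\Delta$ acting distributionally on $\log\varphi$ contributes $4\pi\delta_{P_k}$ by the standard fact $\partial\bar\partial\log|y|^2 = \pi\delta_0$ in the distributional sense (the factor $4\pi$ arising once the $\partial\bar\partial$-to-$\Delta$ normalization and the exponent $2$ are tracked). Summing over $k$ gives the right-hand side of (\ref{MFE}). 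The main obstacle, I expect, is bookkeeping: getting all the normalization constants consistent---the factor of $4$ in $\varphi$, the conversion between $\Delta_H$ and $4\,\partial_x\partial_{\bar x}$, the precise exponent matching in the coordinate change $z=1/x$ so that the $C_0$ and $C_0'$ formulas glue, and the relation $\deg\sigma_H = 2m-2 = 2g$ linking $m=g+1$ to the reciprocal-polynomial operation---rather than any conceptual difficulty, since the PDE content is entirely supplied by the hypothesis that $u\in P_{g+1}[x,y]$ solves (\ref{1}).
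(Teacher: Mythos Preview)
Your proposal is correct and follows essentially the same approach as the paper's proof, which is itself only a sketch referring back to \cite{FL}: reduce to the local identity $\partial_x\partial_{\bar x}\log\varphi=-\sigma_H/u$ via (\ref{1}), convert to $\Delta_H\psi+e^\psi=0$ using the conformal factor $|y^2|/\sigma_H$, and pick up the Dirac masses from the $\log|\zeta|^2$ singularity in a local coordinate at each Weierstrass point (the paper uses $\zeta=\sqrt{x-e_k}$, which is the same as your $y$ up to a holomorphic unit). You in fact supply more than the paper does on two points it leaves implicit---the gluing on $C_0\cap C_0'$ via the reciprocal relation between $u$ and $\widetilde u$, and the positivity of $u(x,\bar x)$ from positive definiteness of $A$---so the only residual work is exactly the exponent bookkeeping you flag, and there the correct count is $\widetilde u(z,\bar z)=|x|^{-2(g+1)}u(x,\bar x)$ against $|g(z)|=|x|^{-(2g+2)}|f(x)|$, which cancel cleanly.
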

\begin{proof}
The proof is the same as that given in our previous paper; we give a sketch of the proof. For more details, see \cite{FL}. Let us verify that $\Delta\psi+e^{\psi}=0$ on $U=X\setminus\{P_{1},\cdots,P_{2g+2}\}.$ We will prove this equation on $U\cap C_{0}.$ Since $u$ satisfies (\ref{1}), on $U\cap C_{0},$
$$\frac{\partial^{2}}{\partial x\partial \bar{x}}\log\varphi=-\frac{u_{x\bar{x}}u-u_{x}u_{\bar{x}}}{u^{2}}=-\frac{\sigma u}{u^{2}}=-\frac{\sigma}{u}.$$
As a consequence,
$$\Delta_{H}\psi=4\frac{|f(x)|}{\sigma(x,\bar{x})}\frac{\partial^{2}}{\partial x\partial \bar{x}}\log\varphi=-4\frac{|f(x)|}{u(x,\bar{x})}=-\varphi=-e^{\psi}.$$
Similarly, the equation holds on $U\cap C_{0}'.$

Let $P=P_{k}$ be a Weierstrass point of $X.$ In a coordinate neighborhood $(U_{P},\zeta)$ of $P=P_{k},$ where $\zeta=\sqrt{x-e_{k}},$ the function $\psi$ has a local expression $\psi=2\log|\zeta|+\alpha,$ where $\alpha$ is nonzero smooth function on $U_{P}.$ By classical analysis, the action of the Laplace operator $\Delta$ on $\psi$ creates a Dirac delta measure $4\pi\delta_{P_{k}}.$ We complete the proof of our assertion.
\end{proof}

%By the uniqueness of solution to (\ref{MFE}), such $A$ is unique if it exists.\\

%Note that the details of the proof of Theorem \ref{pmfe} can be found in ...\\

Since $H$ is $g\times g$ positive definite hermitian matrix, there exits a $g\times g$ unitary matrix $U$ such that $U^{*}HU$ is a diagonal matrix $\Lambda$ with positive diagonals. We assume that $\Lambda=\diag(\lambda_{1},\cdots,\lambda_{g})$ with $\lambda_{i}>0$ for $1\leq i\leq g.$ Let us denote $S_{\Lambda}(t)=\sum_{i=0}^{g-1}\lambda_{i+1}t^{i},$ then the polynomial $\sigma_{\Lambda}(x,y)=S_{\Lambda}(xy)$ is diagonal. In other words, we consider the canonical metric on $X$ of the form
$$
ds_{\Lambda}^{2}=
\begin{cases}
\dsp\frac{\sum_{i=1}^{g}\lambda_{i}(x\bar{x})^{i-1}}{|y^{2}|}dx\otimes d\bar{x}&\mbox{ on $C_{0},$}\\
\dsp\frac{\sum_{i=1}^{g}\lambda_{i}(z\bar{z})^{g-i}}{|w^{2}|}dz\otimes d\bar{z}&\mbox{ on $C_{0}'.$}
\end{cases}
$$
One can use Theorem \ref{thmd} to determine diagonal solutions to (\ref{1}) for $\sigma_{H}$ in this case and to obtain ``positive definite'' solutions to (\ref{1}) for $\sigma_{H},$ we need further analysis, i.e. solutions $u=\mk p(A)$ so that $A$ is a $g\times g$ positive definite hermitian matrix. For $g\geq 2,$ let $\{F^{i}(x_{0},\cdots,x_{g-1},t):i\geq 1\}$ be the sequence of polynomials defined in (\ref{poly}). Let $V$ be the affine algebraic subset of $\mb C^{g+1}$ defined by the zero set of the polynomials $\{F^{g+2},\cdots,F^{2g-1}\}$ and $D_{+}^{g+1}$ be the set of all $n$-tuples of real numbers $(a_{1},\cdots,a_{g+1})$ such that $a_{i}>0$ for all $1\leq i\leq g+1$ and $Q_{+}^{g+1}$ be the subset of all $D_{+}^{g+1}$ consisting of points $(a_{0},\cdots,a_{g+1})$ so that $F^{i}(a_{0},\cdots,a_{g+1})>0$ for $1\leq i\leq g+1.$ If there exists a positive real number $a$ such that $(\Lambda,a)\in V\cap Q_{+}^{g+1},$ then the polynomial

\begin{equation}
u_{(\Lambda,a)}(x,y)=\frac{1}{a}+\sum_{i=1}^{g+1}F^{i}(\Lambda,a)(xy)^{i}
\label{determining function for solution}
\end{equation}

\noindent solves for (\ref{1}) and equals $\mk p(A)$ for $A=\diag(1/a,F^{1}(\Lambda,a),\cdots,F^{g+1}(\Lambda,a))$ and hence determines a solution to (\ref{MFE}) by
\begin{equation}
\psi_{(\Lambda,a)}
=\begin{cases}
\dsp\log\frac{|f(x)|}{u_{(\Lambda,a)}(x,\bar{x})} &\mbox{on $C_{0},$}\\
\dsp\log\frac{|g(z)|}{\widetilde{u}_{(\Lambda,a)}(z,\bar{z})} &\mbox{on $C_{0}',$}
\end{cases}
\label{exlicit solution}
\end{equation}
for $\Lambda=\diag(\lambda_{1},\cdots,\lambda_{g}).$ Let us take a look at the case when $X$ is of genus two and of genus three.

%\begin{prop}
%Let $S(t)=\sum_{i=0}^{g-1}s_{i}t^{i}$ be a polynomial of degree $g-1$ with $s_{i}>0$ for $0\leq i\leq g-1.$ Assume that $\{f_{S}^{i}:g+2\leq i\leq 2g+1\}$ has a nonzero common root $\tau.$ If $\tau$ and $f_{S}^{i}(\tau)>0$ for $2\leq i\leq g+1,$ then the complex polynomial
%$$u(X,Y)=\frac{1}{\tau}+\sum_{i=1}^{g+1}f_{S}^{i}(\tau)(XY)^{i}$$
%is positive definite and determines a solution to (\ref{MFE}) via Theorem \ref{pmfe}.
%\end{prop}
%\begin{proof}
%We know that $u(X,Y)$ solves (\ref{1}) for data $\sigma(X,Y)=S(XY)$ by Theorem \ref{thmd}. Since $f_{S}^{1}(\tau)=\lambda_{1}>0,$ $f_{i}(\tau)>0$ for $1\leq i\leq g+1.$ For any complex number $x,$ we have
%$$u(x,\bar{x})=\frac{1}{\tau}+\sum_{i=1}^{g+1}f_{S}^{i}(\tau)|x|^{2i}.$$
%Hence $u$ is a positive definite solution to (\ref{1}). By Theorem \ref{pmfe}, we can construct solution to \ref{pmfe}.
%\end{proof}

\begin{ex}\label{g2}
Let $X$ be the hyperelliptic curve defined by the equation $y^{2}=f(x)$ with metric $ds^{2}$ where $f(x)$ is a degree $6$ polynomial with $6$ distinct root and
$$ds^{2}=\frac{1+|x|^{2}}{|y^{2}|}dx\otimes d\bar{x}.$$
In this case, $S(t)=1+t.$ % i.e. $\lambda_{1}=\lambda_{2}=1.$ By definition
%\begin{align*}
%F^{4}(x_{0},x_{1},t)&=-\frac{1}{192}t^{2}x_{0}^{2}x_{1}+\frac{1}{64}tx_{1}^{2}\\
%F^{5}(x_{0},x_{1},t)&=\frac{1}{1800}t^{3}x_{0}^{3}x_{0}^{3}x_{1}-\frac{1}{600}t^{2}x_{0}x_{1}^{2}.
%\end{align*}
Then $f_{S}^{1}(t)=1$ and $f_{S}^{2}(t)=(t+1)/4$ and $f_{S}^{3}(t)=t/9$ and
\begin{align*}
f_{S}^{4}(t)&=-\frac{1}{192}t^{2}+\frac{1}{64}t\\
f_{S}^{5}(t)&=\frac{1}{1800}t^{3}-\frac{1}{600}t^{2}.
\end{align*}
One sees that $3$ is the common root of the polynomials $f_{S}^{4}(t)$ and $f_{S}^{5}(t).$ Then $h_{0}=1/3$ and $h_{1}=f_{S}^{1}(3)=1$ and $h_{2}=f_{S}^{2}(3)=1$ and $h_{3}=f_{S}^{3}(3)=1/3.$
%\begin{align*}
%F^{4}(x_{0},x_{1},t)&=-\frac{1}{192}t^{2}x_{0}^{2}x_{1}+\frac{1}{64}tx_{1}^{2}\\
%F^{5}(x_{0},x_{1},t)&=\frac{1}{1800}t^{3}x_{0}^{3}x_{0}^{3}x_{1}-\frac{1}{600}t^{2}x_{0}x_{1}^{2}.
%\end{align*}
We obtain a polynomial $u(x,y)$ by
$$u(x,y)=\frac{1}{3}+xy+(xy)^{2}+\frac{1}{3}(xy)^{3}$$
which solves (\ref{1}) for data $\sigma(x,y)=1+xy.$ %For any complex number $x,$
%$$u(x,\bar{x})=\frac{1}{3}+|x|^{2}+|x|^{4}+\frac{1}{3}|x|^{6}=\frac{(1+|x|^{2})^{3}}{3}.$$
%Then $u$ is positive definite.
This gives us the solution $\psi$ to (\ref{MFE}) by the construction of Theorem (\ref{pmfe}) for the genus $2$ hyperelliptic curve:
$$
\psi=
\begin{cases}
\dsp\log\frac{12|f(x)|}{(1+|x|^{2})^{3}} &\mbox{on $C_{0},$}\\
\dsp\log\frac{12|g(z)|}{(1+|z|^{2})^{3}} &\mbox{on $C_{0}',$}
\end{cases}
$$
The result coincides with that obtained in our previous paper.
\end{ex}

\begin{ex}
Let $X$ be the hyperelliptic curve defined by the equation $y^{2}=f(x)$ with the metric $ds^{2}$ where $f(x)$ is a polynomial with $8$ distinct roots and
$$ds^{2}=\frac{1+|x|^{2}+|x|^{4}}{|y^{2}|}dx\otimes d\bar{x}.$$
In this case, $S(t)=1+t+t^{2}.$ Then
$f_{S}^{1}(t)=1$ and $f_{S}^{2}(t)=(t+1)/4$ and $f_{S}^{3}(t)=(t+1)/9$ and $f_{S}^{4}(t)=(-t^{2}+11t)/192$ and
\begin{align*}
f_{S}^{5}(t) &=\frac{1}{1800}t^{3}-\frac{11}{1800}t^{2}+\frac{1}{75}t\\
f_{S}^{6}(t) &=-\frac{1}{11520}t^{4}+\frac{11}{11520}t^{3}-\frac{1}{405}t^{2}+\frac{1}{324}t\\
f_{S}^{7}(t)&=\frac{1}{58800}t^{5}-\frac{401}{2116800}t^{4}+\frac{373}{705600}t^{3}-\frac{43}{52920}t^{2}
\end{align*}
One sees that $8$ is the common root of the polynomials $f_{S}^{5}(t)$ and $f_{S}^{6}(t)$ and $f_{S}^{7}(t).$ We see that $h_{0}=1/8$ and $h_{1}=f_{S}^{1}(8)=1$ and $h_{2}=f_{S}^{2}(8)=9/4$ and $h_{3}=f_{S}^{3}(8)=1$ and $h_{4}=f_{S}^{4}(8)=1/8.$ We obtain a polynomial
$$u(x,y)=\frac{1}{8}+xy+\frac{9}{4}(xy)^{2}+(xy)^{3}+\frac{1}{8}(xy)^{4}$$
that solves (\ref{1}) for the data $\sigma(x,y)=1+xy+(xy)^{2}.$ %For any complex number $x,$
%$$u(x,\bar{x})=\frac{1}{8}+|x|^{2}+\frac{9}{4}|x|^{4}+|x|^{6}+\frac{1}{8}|x|^{8}$$
%and hence $u$ is positive definite.
This gives us the solution $\psi$ to (\ref{MFE}) by Theorem \ref{pmfe} for the genus $3$ hyperelliptic curve:
$$
\psi=
\begin{cases}
\dsp\log\frac{12|f(x)|}{\left(\frac{1}{8}+|x|^{2}+\frac{9}{4}|x|^{4}+|x|^{6}+\frac{1}{8}|x|^{8}\right)} &\mbox{on $C_{0},$}\\
\dsp\log\frac{12|g(z)|}{\left(\frac{1}{8}+|z|^{2}+\frac{9}{4}|z|^{4}+|z|^{6}+\frac{1}{8}|z|^{8}\right)} &\mbox{on $C_{0}',$}
\end{cases}
$$

\end{ex}

\section{adiabatic limit of solutions to mean field equations}\label{al}

We propose a possible direction following the results above. Rescale the canonical metric by $\gamma \in \mathbb{R}^+,$ i.e. we consider the rescaling of the canonical metric $ds_{\Lambda,\gamma}^{2}=\gamma ds_{\Lambda}^{2}.$  With respect to this metric, the mean field equation is equivalent to
%In terms of the local coordinate,
%$$
%ds_{\Lambda,\gamma}^{2}=
%\begin{cases}
%\dsp\gamma\frac{\sum_{i=1}^{g}\lambda_{i}(x\bar{x})^{i-1}}{|y^{2}|}dx\otimes d\bar{x}&\mbox{ on $C_{0},$}\\
%\dsp\gamma\frac{\sum_{i=1}^{g}\lambda_{i}(z\bar{z})^{g-i}}{|w^{2}|}dz\otimes d\bar{z}&\mbox{ on $C_{0}'.$}
%\end{cases}
%$$

\begin{equation}\label{MFE rescaled}
\Delta\psi_\gamma+\gamma e^{\psi_\gamma}=4\pi \gamma \sum_{i=1}^{2g+2}\delta_{P_{i}}
\end{equation}

\noindent with respect to $ds_\Lambda^2$\footnote{For convenience, we use $\Delta$ instead of $\Delta_{\Lambda}$ in this section.}. Following from the analysis in \cite{KZ}, we study the existence of solution to this equation for small $\gamma$, as well as the limit of the solutions $\{\psi_{\gamma}\}$ as $\gamma \to 0$. Directly observing \eqref{MFE rescaled}, we naturally expect $\Delta\psi_\gamma \to 0$ as $\gamma \to 0$, or that $\psi_\gamma$ approaches to a constant function since $X$ is a connected closed manifold. Classical analysis from \cite{KZ} confirms both expectations. We normalize the metrics so that the area of $X$ is $1$. Let $W^{k,p}(X)$ be the completion of $C^{\infty}(X)$ with respect to the $(k,p)$-norm:
$$\|u\|_{W^{k,p}(X)}=\sum_{j=0}^{k}\left(\int_{X}|\nabla^{j}u|^{p}d\nu\right)^{1/p},$$
where $\nabla^{j}u$ is the $j$ th covariant derivative derivative of $u.$ We call $W^{k,p}(X)$ the Sobolev $(k,p)$-space on $X.$\footnote{In some context, people use $H^{k,p}(X)$ for Sobolev $(k,p)$ spaces.} A technical analytic statement is needed to conclude the asymptotic behaviors:
\begin{prop}
If $u_j \to u$ weakly in $W^{1,2}(X)$, then $e^{u_j} \to e^u$ strongly in $L^2(X)$.
\label{embedding prop}
\end{prop}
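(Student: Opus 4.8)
The plan is to establish this as a compactness statement combining the Moser--Trudinger inequality on a compact surface with the Vitali convergence theorem. First I would recall that on a closed Riemannian surface $X$ the Moser--Trudinger inequality gives constants $C_1, C_2$ depending only on $(X, ds_\Lambda^2)$ such that $\int_X e^{v}\, d\nu \le C_1 \exp\!\left(C_2 \|v\|_{W^{1,2}(X)}^2\right)$ for all $v \in W^{1,2}(X)$, and more importantly that for any fixed $p \ge 1$ and any $R > 0$ the family $\{\,e^{v} : \|v\|_{W^{1,2}(X)} \le R\,\}$ is bounded in $L^p(X)$, with a bound depending only on $p$ and $R$. Since $u_j \to u$ weakly in $W^{1,2}(X)$, the sequence $\|u_j\|_{W^{1,2}(X)}$ is bounded, say by $R$; hence $\{e^{u_j}\}$ is bounded in $L^p(X)$ for every $p$, in particular in $L^4(X)$. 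This $L^4$-bound on $\{e^{2 u_j}\} = \{(e^{u_j})^2\}$ is exactly the uniform integrability needed to upgrade convergence in measure to strong $L^2$-convergence.

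Next I would extract pointwise convergence. By the Rellich--Kondrachov theorem, the embedding $W^{1,2}(X) \hookrightarrow L^2(X)$ is compact, so after passing to a subsequence $u_{j_k} \to u$ strongly in $L^2(X)$, and then along a further subsequence $u_{j_k} \to u$ almost everywhere on $X$. By continuity of the exponential, $e^{u_{j_k}} \to e^{u}$ almost everywhere. Combining a.e.\ convergence with the uniform $L^4$-bound on the squares and applying the Vitali convergence theorem (or equivalently: a.e.\ convergence plus uniform integrability of $\{|e^{u_{j_k}} - e^{u}|^2\}$, which follows since this family is bounded in $L^2(X)$, hence uniformly integrable), we get $e^{u_{j_k}} \to e^{u}$ strongly in $L^2(X)$ along this subsequence. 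Finally, a standard subsequence argument promotes this to convergence of the full sequence: every subsequence of $\{u_j\}$ has a further subsequence along which $e^{u_j} \to e^{u}$ in $L^2(X)$ by the argument just given (the $W^{1,2}$-bound persists along any subsequence), and a sequence in a metric space all of whose subsequences have a further subsequence converging to a common limit itself converges to that limit.

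The main obstacle — and the only genuinely nontrivial input — is the uniform $L^p$-bound on $\{e^{u_j}\}$, i.e.\ the Moser--Trudinger inequality; everything else is soft functional analysis. In the write-up I would either cite the standard form of Moser--Trudinger on closed surfaces (as is done implicitly in \cite{KZ}) or, if a self-contained argument is preferred, deduce the needed $L^p$-bound from the inequality $\int_X e^{v} \le C_1 \exp(C_2 \|v\|_{W^{1,2}}^2)$ applied to $v = p u_j$, noting $\|p u_j\|_{W^{1,2}}^2 = p^2 \|u_j\|_{W^{1,2}}^2 \le p^2 R^2$. One mild technical point worth flagging: Moser--Trudinger as usually stated requires control of both the gradient and the mean (or the full $W^{1,2}$-norm), so one should be careful that the hypothesis "$u_j \to u$ weakly in $W^{1,2}(X)$" does indeed supply a uniform bound on the full norm, which it does since weakly convergent sequences are norm-bounded by the uniform boundedness principle.
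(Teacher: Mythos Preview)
Your argument is correct and is the standard proof of this compactness fact: Moser--Trudinger gives uniform $L^p$-bounds on $\{e^{u_j}\}$, Rellich--Kondrachov yields a.e.\ convergence along a subsequence, and Vitali (or uniform integrability) upgrades this to strong $L^2$-convergence, with the subsequence trick handling the full sequence. The paper does not give its own proof at all --- it simply cites \cite{KZ}, (3.7) --- so your write-up is in fact more detailed than what appears there; the argument you outline is essentially the one in Kazdan--Warner.
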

\begin{proof}
For the proof, see (3.7) in \cite{KZ}
\end{proof}

\begin{thm}[Adiabatic Limit]\label{adiabatic limit}
Solution to \eqref{MFE rescaled} exists for all $\gamma$ small enough and approaches a constant in $W^{2,2}(X)$ as $\gamma \to 0$.
\end{thm}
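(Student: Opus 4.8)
The plan is to follow the variational/implicit-function-theorem strategy of \cite{KZ}. First I would reformulate \eqref{MFE rescaled}: subtracting the Weierstrass-point singularities against a fixed Green's-type function, one reduces to finding a smooth $\psi_\gamma$ on $X$ satisfying an equation of the form $\Delta \psi_\gamma + \gamma e^{\psi_\gamma + h} = 4\pi\gamma\,\overline{c}$ for a fixed smooth $h$ and the constant $\overline{c} = (2g+2)/\mathrm{Area}(X) = 2g+2$ (using the normalization $\mathrm{Area}(X)=1$). Integrating over $X$ forces the compatibility condition $\int_X e^{\psi_\gamma + h}\,d\nu = 4\pi(2g+2)$, which will pin down the additive constant. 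I would then set up the map $\Phi: \mathbb{R}^+ \times W^{2,2}(X) \to W^{0,2}(X)$ (or its mean-zero variant) given by $\Phi(\gamma, \psi) = \Delta\psi + \gamma(e^{\psi+h} - 4\pi\overline{c})$, noting that at $\gamma = 0$ the pair $(0, \psi_0)$ with $\psi_0$ the appropriate constant is a solution.

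Next I would verify the hypotheses of the implicit function theorem in Banach spaces at $(0,\psi_0)$. The linearization in the $\psi$ direction at $\gamma = 0$ is just $\Delta: W^{2,2}(X) \to W^{0,2}(X)$, which is Fredholm of index zero with kernel and cokernel the constants; restricting to mean-zero subspaces (or adjoining the scalar constraint coming from the integral compatibility condition) makes it an isomorphism onto its complement. The nonlinear term $\gamma e^{\psi+h}$ is smooth as a map between these Sobolev spaces precisely because of Proposition \ref{embedding prop} — the Moser–Trudinger-type embedding $W^{1,2} \hookrightarrow L^p$ for all $p$ and the continuity statement $e^{u_j} \to e^u$ in $L^2$ give the needed continuity and (after the standard bootstrap) differentiability of the Nemytskii operator $\psi \mapsto e^{\psi+h}$ on $W^{2,2}(X) \subset C^0(X)$; here the Sobolev embedding $W^{2,2}(X) \hookrightarrow C^0(X)$ in the two-real-dimensional case $X$ is what actually makes the exponential nonlinearity well-behaved, with Proposition \ref{embedding prop} covering the borderline weak-convergence continuity needed to close the argument. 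Applying the implicit function theorem then yields, for all $\gamma$ in a neighborhood of $0$, a solution $\psi_\gamma \in W^{2,2}(X)$ depending continuously (indeed smoothly) on $\gamma$, with $\psi_\gamma \to \psi_0$ in $W^{2,2}(X)$ as $\gamma \to 0$; elliptic regularity upgrades $\psi_\gamma$ to a genuine smooth solution away from the $P_i$.

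To finish I would identify the limit constant: the compatibility condition $\int_X e^{\psi_\gamma+h}\,d\nu = 4\pi(2g+2)$ passes to the limit by Proposition \ref{embedding prop}, giving $e^{\psi_0}\int_X e^h\,d\nu = 4\pi(2g+2)$, so $\psi_0 = \log\!\big(4\pi(2g+2)\big) - \log\!\int_X e^h\,d\nu$ is the explicit constant, and the convergence $\psi_\gamma \to \psi_0$ in $W^{2,2}(X)$ is exactly the asserted adiabatic limit. The main obstacle, and the point requiring real care rather than bookkeeping, is establishing that the linearized operator is an isomorphism between the correctly chosen function spaces: one must handle the one-dimensional kernel/cokernel of $\Delta$ on a closed surface, which is done by working modulo constants while simultaneously carrying the nonlinear scalar constraint that fixes the additive constant, and one must check that the nonlinearity genuinely lands in and is differentiable into $W^{0,2}(X)$ uniformly near $\gamma=0$ — this is where Proposition \ref{embedding prop} (the continuity $e^{u_j}\to e^u$ in $L^2$ under weak $W^{1,2}$ convergence) is essential and must be invoked precisely. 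A secondary technical point is justifying the reduction to a singularity-free equation with the right smooth background term $h$, i.e.\ subtracting off a function with the prescribed logarithmic poles at the Weierstrass points whose Laplacian reproduces $4\pi\sum\delta_{P_i}$ minus its average; this is standard potential theory on Riemann surfaces but should be stated carefully so that the constant $\overline{c}$ and the final value of $\psi_0$ come out correctly.
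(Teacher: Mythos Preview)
Your implicit function theorem approach is correct and is a genuinely different route from the paper's proof. The paper follows Kazdan--Warner directly: after the same Green's-function substitution it minimizes the functional $J(u)=\int_X\bigl(\tfrac12|\nabla u|^2+8\pi\gamma(g+1)u\bigr)\,d\nu$ on the constraint set $\int_X he^u\,d\nu=8\pi(g+1)$, uses the Moser--Trudinger inequality to obtain the coercivity bound $J(u)\ge\tfrac{1}{4\beta}(2\beta-8\pi\gamma(g+1))\|\nabla u\|_{L^2}^2+\delta$, extracts a weak $W^{1,2}$ limit of a minimizing sequence, invokes Proposition~\ref{embedding prop} to pass $e^{v_\gamma}\to e^v$ in $L^2$, and then bootstraps via elliptic regularity to get uniform $W^{2,2}$ bounds and $\Delta v=0$ in the limit. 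Your IFT argument is cleaner and yields smooth dependence of $\psi_\gamma$ on $\gamma$ for free, but it only gives existence on an unspecified neighbourhood of $\gamma=0$; the paper's variational argument produces the explicit upper bound $\gamma_0=\beta/(4\pi(g+1))$ in terms of the Trudinger constant, which is precisely the geometric quantity the authors go on to discuss. Two small technical remarks on your write-up: first, the background term coming from the Green's functions is not a fixed $h$ but rather $4\pi\gamma\sum_i G_i$, so the weight in the exponential depends on $\gamma$ (this is harmless for the IFT since everything is jointly smooth for small $\gamma$, but your formula for the limiting constant should be read with $h_0=0$); second, in your framework Proposition~\ref{embedding prop} is not actually needed---the Sobolev embedding $W^{2,2}(X)\hookrightarrow C^0(X)$ already makes $\psi\mapsto e^\psi$ a smooth Nemytskii operator between the relevant spaces, whereas in the paper's proof that proposition is essential because one only has weak $W^{1,2}$ convergence of the minimizers.
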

\begin{proof}
We only sketch the existence part of the proof since it is a replica of the proof from Theorem 7.2 in \cite{KZ}.
Let

  \begin{equation}
  \psi_\gamma := v_\gamma +4\pi \gamma \sum_{i=1}^{2g+2} G_i,
  \label{substituion to KW}
  \end{equation}

 \noindent where $G_i$ is the Green's function satisfying $\Delta G_i = - \delta_{P_i} +1$. Solving \eqref{MFE rescaled} is then equivalent to solving the following equation

  \begin{equation}
  \Delta v_\gamma + \gamma h e^{v_\gamma} = 8\pi\gamma (g+1),
  \label{Kazdan Warner rescaled}
  \end{equation}

  \noindent where the function $h=\exp\left(4\pi\sum_{i=1}^{2g+2} G_i\right)\in C^\infty(X)$ is nonnegative with zero set precisely the Weierstrass points. This is a Kazdan-Warner equation of the type discussed in section 7 from \cite{KZ}, which is solved by variational method. One notes that \eqref{Kazdan Warner rescaled} is the minimizing equation to the functional

  \begin{equation}
  J(u) = \int_X \left(\frac{1}{2} |\nabla u|^2 + 8\pi\gamma (g+1) u\right) d\nu
  \label{minimizing functional}
  \end{equation}

  \noindent on the subset $B \subset W^{1,2}(X)$ satisfying the constraint equation

  \begin{equation}
  \int_X he^{u} d\nu = 8\pi(g+1).
  \label{constraint equation}
  \end{equation}

  Following identical reasoning, we have the following estimate for $J$:

  \begin{equation}
    J(u) \geq \frac{1}{4\beta}(2\beta - 8\pi\gamma(g+1))\|\nabla u\|_{L^{2}(X)}^2+\delta,
    \label{lower bound for J}
  \end{equation}

\noindent where $\delta$ is a constant and $\beta$ is a Trudinger constant for $X$ both independent of $\gamma$. More precisely, $\beta$ is a positive constant so that

\[\int_X e^{\beta v^2} d\nu\]

\noindent are uniformly bounded for all $v\in W^{1,2}(X)$ with $\bar{v}=0$ and $\|\nabla v\|_{L^{2}(X)} \leq 1$. Such a constant always exists for surfaces (cf. (3.4) in \cite{KZ}). Therefore, for $\gamma$ small enough so that $2\beta - 8\pi\gamma(g+1)>0$, $J$ is bounded below and positive.

For each $\gamma$, \eqref{lower bound for J} and Sobolev embedding shows that the minimizing sequence $\{v_\gamma^i\}$ of $J$ is contained in a fixed ball of radius $R_\gamma$ in $W^{1,2}(X)$, which is weakly compact. Passing to a subsequence, let $v_\gamma$ be the weak limit. Arguments in the proof of Theorem 5.3 in \cite{KZ} show that $v_\gamma$ minimizes $J$ in $B$ and therefore is a strong limit and solution to \eqref{Kazdan Warner rescaled}. The proof there also provides a regularity argument, which is applicable to our case here, to show that $v_\gamma$ is actually smooth. The existence of a smooth solution for each $\gamma$ is established.

Furthermore, one notices that the radii $R_\gamma$ are uniformly controlled over $\gamma$ (in fact proportional to $(2\beta - 8\pi\gamma(g+1))^{-1}$) and therefore $\{v_\gamma\}$ are uniformly bounded in $W^{1,2}(X)$. Following identical arguments, let $v $ be the limit of $v_\gamma$ in $W^{1,2}(X)$. Proposition \ref{embedding prop} then implies that $e^{v_\gamma}$ converge to $e^v$ in $L^2(X)$ and therefore are uniformly bounded in $L^2(X)$. It then follows from elliptic regularity of $\Delta$ in \eqref{Kazdan Warner rescaled}:

\begin{equation}
  \|v_\gamma\|_{W^{2,2}(X)} \leq c(\gamma \|8\pi(g+1)-he^{v_\gamma}\|_{L^{2}(X)}+\|v_\gamma\|_{L^2(X)})
  \label{elliptic bound}
\end{equation}

\noindent that $v_\gamma$ are uniformly bounded in $W^{2,2}(X)$. The estimate, together with some Schauder estimates, also imply that $v\in C^\infty(X)$. After taking a subsequence, we conclude that $v_\gamma \to v$ in $W^{2,2}(X)$. Taking the limit $\gamma \to 0$ in \eqref{Kazdan Warner rescaled}, it then follows that

\begin{equation}
\Delta v = \lim_{\gamma \to 0} \Delta v_\gamma =0
\label{limit of Laplacian}
\end{equation}

\noindent and therefore $v$ is a constant function since $X$ is closed.

\end{proof}

It is of great interest, as stated in \cite{KZ}, to study the upper bound of $\gamma$:

\[\gamma_0=\frac{\beta}{4\pi(g+1)}\]

\noindent for \eqref{Kazdan Warner rescaled} to be solvable, a quantity related to the geometry of $X$. It is not immediately clear wether $\gamma_0 \geq 1$, despite the explicit solution to \eqref{Kazdan Warner rescaled} with $\gamma=1$ in section \ref{smfe}. One may attempts to construct a variation of \eqref{exlicit solution} depending on $\gamma$, and its corresponding mean field equation so that the limiting solution at $\gamma=0$ coincides with that of Theorem \ref{adiabatic limit}. Such a conjecture provides significant geometric insight. In the case of Example \ref{g2} where solutions are precisely the logarithm of Gaussian curvatures, the limiting solution suggests that the manifold deforms into $\mathbb{S}^2$, a sign of topological jumps, or bubbling.

%Now let $\psi$ be a solution to (\ref{MFE}) and $v=\psi-\sum_{i=1}^{2g+2}u_{i}.$ In a small neighborhood $U_{P}$ of $P=P_{i},$ $\psi$ and $u_{i}$ has the local expression: $\psi=2\log|\xi|+\alpha$ and $u_{i}=2\log|\xi|+\beta,$ where $\alpha$ and $\beta$ are smooth functions on $U_{P}.$ Therefore $v=\alpha-\beta$ is a smooth function on $U_{P}$ for $P=P_{i}.$ This shows that $v$ is a globally defined smooth function on $X.$ Furthermore, one can verify that $v$ is a solution to (\ref{KZ}).

%Denote $2g+2$ by $n$ in this section and $d:X\times X\to[0,\infty)$ be the geodesic distance defined by the canonical metric $ds^{2}=ds_{H}^{2}.$ For each $1\leq i\leq n,$ we set $r_{i}(x)=d(x,P_{i}).$ Choose $\delta>0$ such that $D_{i}=\{x\in X:r_{i}(x)<\delta\}$ is a disk and let $G_{i}(x)$ be the Green's function for $D_{i}$ with singularity at $P_{i}.$ By classical P.D.E. theory, $G_{i}(x)=-\frac{1}{2\pi}\ln r_{i}(x)+O(1)$ as $r(x)\to 0.$ Choose a smooth function $u_{i}$ on $X\setminus\{P_{i}\}$ such that $u_{i}=-4\pi G_{i}(x)$ for $r_{i}(x)<\delta.$ By construction, the function $h=\exp(\prod_{i=1}^{n}u_{i})$ is a globally defined smooth function on $X$ with (double) zeros at $\{P_{i}:1\leq i\leq n\}.$

\bibliography{sampartb}

\end{document}